\documentclass[11pt]{amsart}
\usepackage{amsmath,amssymb,amsthm}
\usepackage[latin1]{inputenc}
\usepackage{version,tabularx,multicol}
\usepackage{graphicx,float,psfrag}

\headheight=8pt
\textheight=624pt
\oddsidemargin=18pt
\topmargin=0pt
\textwidth=15,5cm
\evensidemargin=18pt

\newcommand{\reff}[1]{(\ref{#1})}

\theoremstyle{plain}
\newtheorem{theo}{Theorem}[section]
\newtheorem{cor}[theo]{Corollary}
\newtheorem{prop}[theo]{Proposition}
\newtheorem{lem}[theo]{Lemma}

\theoremstyle{remark}
\newtheorem{rem}[theo]{Remark}

\newcommand{\cb}{{\mathcal B}}

\newcommand{\cl}{{\mathcal L}}
\newcommand{\cn}{{\mathcal N}}

\newcommand{\cp}{{\mathcal P}}

\newcommand{\calr}{{\mathcal R}}

\newcommand{\ct}{{\mathcal T}}
\newcommand{\cu}{{\mathcal U}}

\newcommand{\E}{{\mathbb E}}

\newcommand{\N}{{\mathbb N}}
\renewcommand{\P}{{\mathbb P}}

\newcommand{\T}{{\mathbb T}}

\newcommand{\bm}{\mathbf m}

\newcommand{\bE}{{\mathbf E}}
\newcommand{\bP}{{\mathbf P}}
\newcommand{\bs}{{\mathbf s}}
\newcommand{\bt}{{\mathbf t}}

\newcommand{\ind}{{\bf 1}}

\newcommand{\Card}{{\rm Card}\;}

\newcommand{\val}[1]{\mathop{\left| #1 \right|}\nolimits}
\newcommand{\inv}[1]{\mathop{\frac{1}{ #1}}\nolimits}
\newcommand{\expp}[1]{\mathop {\mathrm{e}^{ #1}}}

\newcommand{\lb}{[\![}
\newcommand{\rb}{]\!]}

\title{$\beta$-coalescents and stable Galton-Watson trees}

\date{\today}
\author{Romain Abraham} 

\address{
Romain Abraham,
Laboratoire MAPMO, CNRS, UMR 7349,
F\'ed\'eration Denis Poisson, FR 2964,
 Université d'Orléans,
B.P. 6759,
45067 Orléans cedex 2,
France.
}
  
\email{romain.abraham@univ-orleans.fr}

\author{Jean-François Delmas}

\address{
Jean-Fran\c cois Delmas,
Université Paris-Est, \'Ecole des Ponts, CERMICS, 6-8
av. Blaise Pascal, 
  Champs-sur-Marne, 77455 Marne La Vallée, France.}

\email{delmas@cermics.enpc.fr}


\begin{document}

\begin{abstract}
Representation of coalescent process using pruning of trees has been
used by Goldschmidt and Martin for the Bolthausen-Sznitman coalescent and by
Abraham and Delmas for the $\beta(3/2,1/2)$-coalescent. 
By considering a pruning procedure on stable Galton-Watson tree with
$n$ labeled leaves, we give a representation of the discrete 
$\beta(1+\alpha,1-\alpha)$-coalescent, with $\alpha\in [1/2,1)$ starting
from the trivial partition of the $n$ first integers. The construction can
also be made directly on the stable continuum L\'evy tree, with
parameter $1/\alpha$, simultaneously for all $n$. This representation
allows to use results on the asymptotic number of coalescence events to
get the asymptotic number of cuts in stable Galton-Watson tree (with
infinite variance for the offspring distribution) needed to isolate the
root. Using convergence of the stable Galton-Watson tree conditioned to
have infinitely many leaves, one can get the asymptotic distribution of
blocks in the last coalescence event in the
$\beta(1+\alpha,1-\alpha)$-coalescent. 
\end{abstract}

\maketitle

\section{Introduction}
\subsection{Framework}
\label{sec:framework}
The idea of constructing  coalescent processes by pruning discrete trees
arises   first  in   \cite{gm:rrtbsc}   where  the   Bolthausen-Sznitman
coalescent  is constructed by  a uniform  pruning of  the branches  of a
random  recursive tree, see  also \cite{s:debsc} and  \cite{fs-j:mcsbsc} for
applications of such a representation. The
same  kind of  ideas  has been  used  in \cite{ad:cbcpbt}  to construct  a
$\beta(3/1,1/2)$-coalescent  process  using  a  uniform pruning  of  the
branches  of a  uniform random  binary tree.  This construction  is also
closely  related to  Aldous's continuum  random tree.  The goal  of this
paper  is  to  extend  this  result  by  applying  a  pruning  at  nodes
(introduced  in   \cite{ad:falpus}  in  a  continuous   setting  and  in
\cite{adh:pgwttvmp}  in  a discrete  setting)  to  a  stable L\'evy  tree,
obtaining   a    $\beta(1+\alpha,1-\alpha)$-coalescent   process,   with
$1/2\leq \alpha<1$.

Let $\Lambda$ be a   finite  measure  on
$[0,1]$. 
A $\Lambda$-coalescent $(\Pi(t),t\ge 0)$ is a Markov process which takes
values in the set of partitions of $\N^*=\{1, 2, \ldots\}$ introduced in
\cite{p:cmc} for  coalescent processes with possible
multiple collisions. It is defined via  the transition rates of its restriction
$\Pi^{[n]}=(\Pi^{[n]}(t),t\ge 0)$ to the  $n$ first integers: if $\Pi^{[n]}(t)$ is
composed of $b$  blocks, then $k$ $(2\le k\le  b)$ fixed blocks coalesce
at rate:
\begin{equation}
   \label{eq:lambda-bn}
\lambda_{b,k}=\int_0^1u^{k-2}(1-u)^{b-k}\Lambda (du).
\end{equation}
In particular a coalescence event happens at rate:
\begin{equation}
   \label{eq:lambda-n}
\lambda_b=\sum_{k=2}^b \binom{b}{k} \lambda_{b,k}. 
\end{equation}
We take the convention $\lambda_1=0$. We         also        define        the         discrete        process
$\Pi_\text{dis}^{[n]}=(\Pi_\text{dis}^{[n]}   (k),  k\in  \N)$   as  the
different successive states of  the process $\Pi^{[n]}$ until it reaches
the absorbing  state (which is  the trivial partition consisting  in one
block) and afterward the discrete process remains constant.

As examples  of $\Lambda$-coalescents, let us  mention: 
\begin{itemize}
   \item the Kingman's coalescent with $\Lambda(dx)=\delta_0(dx)$,  see
     \cite{k:c},
   \item the  Bolthausen-Sznitman
coalescent  with $\Lambda(dx)=\ind_{(0,1)}(x)dx$, see  \cite{bs:rpcacm},
   \item the $\beta$-coalescents   where $\Lambda(dx)$  is  (up to a
     multiplicative constant) the  $\beta(a,b)$ distribution. In the
     case of the $\beta(1+\alpha, 1-\alpha)$-coalescent, that is 
$\Lambda(dx)=(x/(1-x))^\alpha \, dx$, see \cite{bbcemsw:asbbc,
  bbs:bcsrt} for $-1<\alpha<0$. The case $\alpha=0$ corresponds to the
Bolthausen-Sznitman coalescent, while the limit case $\alpha=-1$ formally
corresponds to the Kingman's coalescent. 
For the $\beta(1+\alpha,-\alpha)$-coalescent, with $-1<\alpha<0$  see 
\cite{fh:cbi-gfvi}.  
\end{itemize} 
We  refer to the survey  \cite{b:rpct} for further
results on coalescent processes.

Let $\alpha\in [1/2,1)$. We consider  a  critical  Galton-Watson (GW) tree  $T$ with
offspring  distribution  characterized by  its  generating function  for
$r\in [0,1]$:
\begin{equation}\label{eq:def-g}
g(r)=r+\alpha(1-r)^{1/\alpha}.
\end{equation}
This GW tree arises as the shape of the sub-tree of a stable L\'evy tree
with index $\gamma=1/\alpha$ generated by  leaves chosen in a Poissonian
manner,  see  \cite{dlg:rtlpsbp}, Theorem  3.2.1.  We  shall call  these
random trees the  stable GW trees with parameter $\gamma$.  We denote by
$\bP$ the  distribution of $T$.  If  $x$ is a  node of $T$ we  denote by
$k_x(T)$   the   number   of   offsprings   of   $x$.    If   $k_x(T)=0$
(resp. $k_x(T)>0$), then $x$ is called  a leaf (resp.  an internal node)
of $T$. We denote by $L(T)$ the number of leaves of the tree $T$. Since $g'(0)=0$, we get  that a.s. $k_x(T)\neq 1$ for all $x\in
T$.  We denote by $\bP_n$ the law of $T$ conditioned to have exactly $n$
leaves.   Under $\bP_n$,  we label  the leaves  of $T$  from $1$  to $n$
uniformly  at random,  independently of  $T$, and  then we  consider the
following pruning procedure which is derived from \cite{adh:pcrtst}, see
Section \ref{sec:d-tree-valued}.  Choose an  internal node  $x_1$ (which
has at least 2 children) at random with probability:
\[
\frac{k_{x_1}(T)-1}{L(T)-1}\cdot
\]
This internal node separates the tree into two subtrees: the fringe
sub-tree $T_{x_1}$ rooted at $x_1$ that consists of all nodes of $T$
that have $x_1$ on their lineage to the root (including $x_1$), and
the set $T\setminus T_{x_1}$ which is still a tree.
We set $T_{(1)}=(T\setminus T_{x_1})\cup\{x_1\}$ which is the new tree
we work with. All the leaves of $T_{(1)}$ except $x_1$ are leaves of
$T$ and they keep their label. Notice that $x_1$ is a new leaf of $T_{(1)}$ and we label it by the block
(i.e. the sequence) of labels of the leaves of $T_{x_1}$. We then
iterate the procedure on the tree $T_{(1)}$ and so on until the root is
chosen (see Figure \reff{fig:pruning}).

This    pruning   procedure    defines   a    discrete    time   process
$\Pi_\text{GW}^{[n]}=(\Pi_\text{GW}^{[n]}(k),k\in \N)$  taking values in
the    set    of    partitions    of    the    $n$    first    integers,
$\Pi_\text{GW}^{[n]}(k)$ being  the set of  labels of the leaves  of the
tree $T_{(k)}$ obtained after the $k$-th  cut.

\begin{center}
\begin{figure}[ht]
\includegraphics[width=5cm]{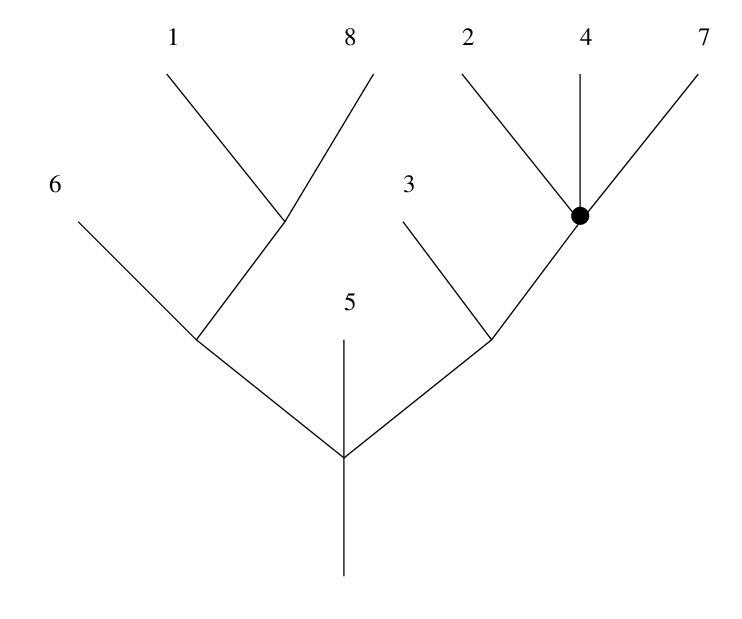}
\includegraphics[width=5cm]{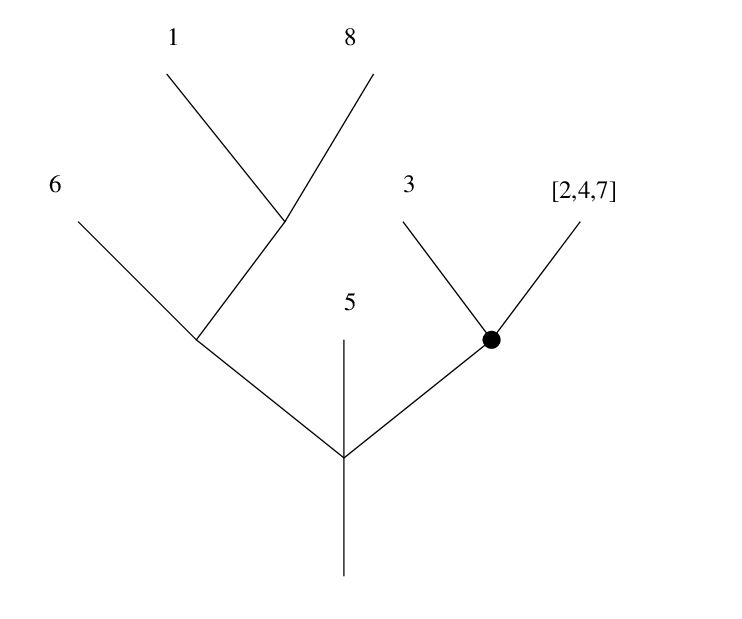}\\
\includegraphics[width=5cm]{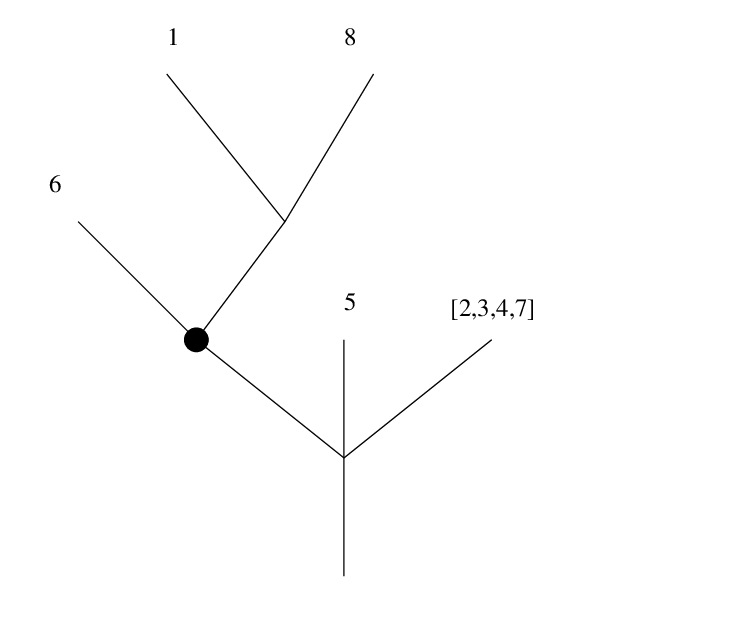}
\includegraphics[width=5cm]{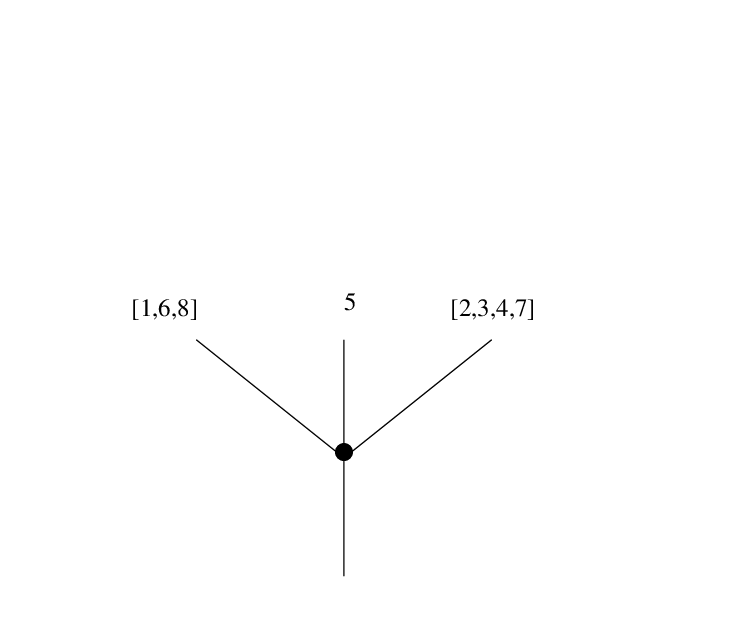}
\includegraphics[width=5cm]{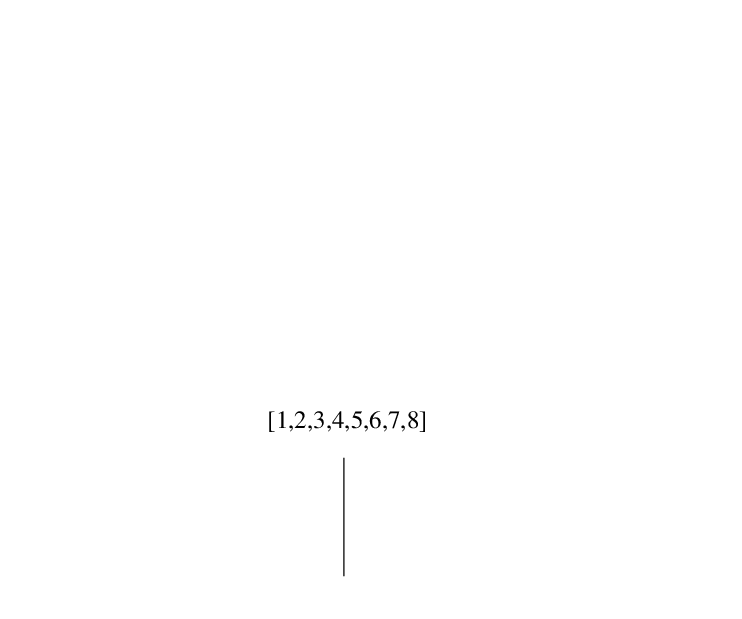}
\caption{The pruning at node of a given tree. The bold internal node
  corresponds to the next chosen node.}\label{fig:pruning}
\end{figure}
\end{center}

\subsection{Main result}
The process $\Pi_\text{GW}^{[n]}$ is then a coalescent process starting
from the trivial partition consisting of singletons and blocks merge
together as time goes by. Its law is given in the next theorem.

\begin{theo}\label{th:main}
We set $\alpha=\inv{\gamma}\in[1/2,1)$.
The process $\Pi_\text{GW}^{[n]}$ is distributed under $\bP_n$ as
$\Pi^{[n]}_\text{dis}$ for the
 $\beta(1+\alpha,1-\alpha)$-coalescent with coalescent measure:
\begin{equation}
   \label{eq:def-Lambda}
\Lambda(dx)=\left(\frac{x}{1-x}\right)^\alpha \, dx.
\end{equation}
\end{theo}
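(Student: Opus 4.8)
The plan is to compute the one-step transition probabilities of $\Pi_\text{GW}^{[n]}$ and match them with the discrete transition probabilities of the $\beta(1+\alpha,1-\alpha)$-coalescent restricted to $[n]$. Since $\Pi_\text{GW}^{[n]}$ is Markov (the pruning at a uniformly chosen internal node depends only on the current tree $T_{(k)}$, not on the cutting history, once we track the block-labels), it suffices to show: given that $T_{(k)}$ has $b$ leaves and a prescribed partition structure, the probability that the next cut merges a given set of $k$ of these leaf-blocks into one equals
\[
p_{b,k}:=\frac{\lambda_{b,k}}{\lambda_b}=\frac{\int_0^1 u^{k-2}(1-u)^{b-k}\,\Lambda(du)}{\sum_{j=2}^b\binom{b}{j}\int_0^1 u^{j-2}(1-u)^{b-j}\,\Lambda(du)}.
\]
The numerator of the probability on the tree side is governed by the law, under $\bP$ conditioned appropriately, of the number of children $k_{x}(T_{(k)})$ of the node chosen and of \emph{which} leaves sit above it; the chosen node is picked with weight proportional to $k_x-1$, so I expect the combinatorial identity
\[
\bP_n\bigl(\text{next cut merges a fixed }k\text{-subset}\bigr)\;\propto\;\int_0^1 u^{k-2}(1-u)^{b-k}\left(\frac{u}{1-u}\right)^{\alpha}du
\]
to drop out after the normalisation by $L(T_{(k)})-1=b-1$.

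The key technical input is a description of the law of the pruned tree $T_{(k)}$ together with the subtrees hanging above its leaves. Here I would invoke the structure established in Section~\ref{sec:d-tree-valued} (following \cite{adh:pcrtst,adh:pgwttvmp}): the pruning at nodes of a GW tree with generating function $g$ produces again a GW-type object whose node-pruning kernel is explicit, and crucially the subtrees above the new leaves are themselves conditioned stable GW trees with a number of leaves that is ``spread'' among the merged blocks according to a $\beta$-type weighting. Concretely, if a node with $m+1$ children is cut and the $n_1,\dots,n_{m+1}$ leaf-counts of the hanging subtrees are specified, then the probability of that configuration under $\bP_n$ factorises, via the GW many-leaves formula, into a product that — after summing over tree-shapes with given leaf-counts — yields a Dirichlet-type expression. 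Converting the discrete leaf-count weighting into the integral representation of $\lambda_{b,k}$ uses the classical identity that for the stable offspring law \reff{eq:def-g} one has, for the probability that a size-biased branch point splits the $b$ blocks as $(k \text{ blocks into } 1, \text{ rest untouched})$, exactly a Beta$(k-\alpha, b-k+\alpha)$ normalising constant; equivalently $\Lambda(dx)=(x/(1-x))^\alpha dx$ arises because $1-g'(1-x)=x^{1/\alpha}/(\alpha \cdot x) \cdots$ — the precise bookkeeping of these generating-function evaluations is the heart of the matter.

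I would organise the proof as follows. First, I record from the construction that $\Pi_\text{GW}^{[n]}$ is a coalescent Markov chain and identify its state as the partition of $[n]$ induced by the leaf-labels of $T_{(k)}$. Second, I write down the exact formula, under $\bP_n$, for the probability that the first cut is at a node with a given set $A$ of $|A|=k$ leaves above it, as a ratio of GW-tree weights, using the many-leaves enumeration formula for GW trees (the number of GW trees with $n$ leaves and its generating-function expression). Third, I sum over all admissible tree shapes: the shape of $T_{(1)}$ with $x_1$ as a leaf, and the shape of the subtree above $x_1$ containing the $k$ leaves of $A$; this is where the self-similarity of stable GW trees under pruning — quotable from Section~\ref{sec:d-tree-valued} — reduces the sum to a one-parameter integral. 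Fourth, I evaluate that integral and recognise it as $\lambda_{b,k}$ up to the common factor $\lambda_b^{-1}$, using \reff{eq:lambda-bn} with $\Lambda$ as in \reff{eq:def-Lambda}. The main obstacle will be step three: correctly handling the conditioning on the total number of leaves $n$ while the cut happens ``locally'', i.e. showing the leaf-counts in the hanging subtrees are exchangeable with the right marginal so that the $n$-dependence cancels between numerator and denominator and the limiting integral is exactly the Beta integral defining $\lambda_{b,k}$. I expect this to follow from a generating-function computation with $g$, exploiting that $g(r)-r=\alpha(1-r)^{1/\alpha}$ is, up to constants, a pure power — which is precisely the feature of the \emph{stable} reproduction law that makes the $\beta$-coalescent (and not some other $\Lambda$-coalescent) appear.
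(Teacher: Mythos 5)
Your overall strategy coincides with the paper's: identify the one-step transitions of $\Pi_\text{GW}^{[n]}$ with $\lambda_{b,k}/\lambda_b$ by (i) computing the law of the first coalescence event under $\bP_n$ and (ii) restarting the argument after the first cut. The genuine gap is in (ii), which you yourself flag as the ``main obstacle'' but propose to settle by quoting the ``self-similarity of stable GW trees under pruning'' from Section~\ref{sec:d-tree-valued}. That section (and the results of \cite{adh:pcrtst,adh:pgwttvmp} it invokes) only gives that the \emph{unconditioned} pruned tree $\cp_\theta(T)$ at a \emph{fixed} time $\theta$ is again a GW tree, with generating function $g_\theta$. What your induction needs is different: under $\bP_n$, the tree remaining at the \emph{random} time $S$ of the first cut, conditioned on having $k$ leaves, must be exactly $\bP_k$-distributed (with exchangeably relabelled blocks). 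This does not follow from the fixed-time, unconditioned statement; in the paper it is Lemma~\ref{lem:Pn->Pk}, proved by a direct computation with the product formula \reff{eq:proba-tree}: one writes $T=\bt\circledast_u\bs$, sums over the grafted tree $\bs\in\T_{n-k+1}$ and the grafting leaf $u$, and checks that the resulting weight of $\bt$ is $\bP_k(T=\bt)$ times a constant independent of $\bt$. Without this renewal property, matching the first transition from the trivial partition does not identify the law of the whole chain, because after one cut the conditional law of the underlying tree given the partition is unknown.

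A second, smaller gap is that step (i) is left as a heuristic, and the heuristic is not quite right: the Beta integral does not ``drop out'' from a single weighting. In the paper the identification combines three separate computations: the factorisation $\bP_n(T=\bt_1\circledast_u\bt_2)\propto\bP_{n-k+1}(T=\bt_1)\,\bP_k(T=\bt_2)$ with explicit Gamma factors coming from \reff{eq:proba-tree}; the generating-function evaluation $\bE_k\left[k_\emptyset(T)-1\right]=\frac{1-\alpha}{\alpha}\,\frac{\Gamma(1-\alpha)}{\Gamma(\alpha)}\,\frac{\Gamma(k-1+\alpha)}{\Gamma(k-\alpha)}$ of Lemma~\ref{lem:k0}, which is where the size-biased weight $k_x-1$ of the chosen node enters; and the closed form $\lambda_{n,k}/\lambda_n=\frac{1-\alpha}{\Gamma(\alpha+1)}\,\frac{\Gamma(k+\alpha-1)\Gamma(n-k-\alpha+1)}{\Gamma(n-\alpha)}\,\frac{1}{n-1}$ of Lemmas~\ref{lem:f(l)} and~\ref{lem:lnk/ln}. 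Your guessed $\mathrm{Beta}(k-\alpha,b-k+\alpha)$ normalising constant is not the one that appears; the match requires the cancellation $\Gamma(k-\alpha)\cdot\frac{\Gamma(k-1+\alpha)}{\Gamma(k-\alpha)}=\Gamma(k+\alpha-1)$ between the two distinct sources above. So the plan is the right one, but both pillars of the argument remain to be built.
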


\begin{rem}
Notice that the process $\Pi^{[n]}_\text{dis}$ is discrete in time and
thus characterizes the coalescent measure up to a multiplicative
constant. It is possible to construct the continuous-time coalescent process
$\Pi^{[n]}$ associated with the measure $\Lambda$ given by Equation
\reff{eq:def-Lambda} from the process  $\Pi_\text{GW}^{[n]}$ by adding
exponential times between the successive states of this process.
More precisely, recall the definitions of the transitions rates
$\lambda_{b,k}$ of Equation \reff{eq:lambda-bn} and of the jump rates
$\lambda_b$ of Equation \reff{eq:lambda-n}. Let $(\tau_k)_{k\in\N}$ be
a sequence of independent random variables such that, conditionally
given the process $\Pi_\text{GW}^{[n]}$, the random variable $\tau_k$
is exponentially distributed with parameter $\lambda_{\ell_k}$ where
$\ell_k$ is the number of blocks of the partition
$\Pi_\text{GW}^{[n]}(k)$, with the convention that $\tau_k=+\infty $
  if $\ell_k=1$. Then we set
\[
\tilde\Pi^{[n]}(t)=\Pi_\text{GW}^{[n]}(k)\quad\mbox{ if
}\sum_{i=0}^{k-1}\tau_i\le t<\sum_{i=0}^k\tau_i.
\]
As a direct consequence of Theorem \ref{th:main} and the definition of
a $\Lambda$-coalescent, we get that the processes $\Pi^{[n]}$ and
$\tilde\Pi^{[n]}$ have the same distribution.
\end{rem}

One major  drawback of this construction  is that we  define the process
for  fixed $n$  and  not simultaneously  for  all $n$.   However, as  in
\cite{ad:cbcpbt},    we    can    construct   directly    the    process
$(\Pi(\theta),\theta\ge 0)$  taking values in  the set of  partitions of
the integers using  the pruning of a L\'evy  continuum random tree. More
precisely,  we  consider  the   weighted  stable  L\'evy  tree  $(\ct,d,
\bm^\ct)$      associated     with      the      branching     mechanism
$\psi(\lambda)=\lambda^\gamma$ for $\gamma\in(1,2)$ (the case $\gamma=2$
is  studied in \cite{ad:cbcpbt}  and requires  a different  pruning). We
recall that  $\ct$ is  a real  tree and that  $\bm^\ct$ corresponds  to a
uniform  measure  on  the   leaves  of  $\ct$,  see  \cite{dlg:rtlpsbp},
\cite{dlg:pfalt} and  also \cite{adh:etiltvp} more  specifically for the
space of  weighted real trees.   We work under the  so-called normalized
excursion  measure $\N^{(1)}$  under  which $\bm^\ct$  is a  probability
measure.    We   consider   given   $\ct$   the   pruning   defined   in
\cite{ad:falpus}: to each branching point  $x$ of $\ct$ we can associate
a  ``mass'' $\Delta_x$ of  this node,  which intuitively  represents the
size of its progeny, and  a random variable $E_x$ which is exponentially
distributed with  parameter $\Delta_x$. This  random variable represents
the time  at which  the node $x$  is cut.  When we cut  such a  node, we
remove  the sub-tree above  it.  Let  $\ct_\theta$ denote  the continuum
random sub-tree obtained  at time $\theta\geq 0$. We  define a coalescent
process  using the  usual paintbox  procedure.  Let  $(U_i,i\in\N^*)$ be
independent   random  variables   with   distribution  $\bm^\ct$   under
$\N^{(1)}$.   We  define  a   partition  of  $\N^*$  at  time  $\theta$,
$\Pi_\text{Lévy}(\theta)$ by saying that two integers $i$ and $j$ belong
to the same block of $\Pi_\text{Lévy}(\theta)$ if and only if the random
variables  $U_i$ and  $U_j$  have a  leaf  of $\ct_\theta$  as a  common
ancestor. Intuitively this means that $U_i$ and $U_j$ belong to the same
sub-tree attached  above $\ct_\theta$.  This defines  a coalescent process
$\Pi_\text{Lévy}=(\Pi_\text{Lévy}(\theta),  \theta\geq 0)$.  We  are now
interested in its  discrete  (in  time) restriction  to  the $n$  first
integers.   Let  $\Pi_\text{Lévy}^{[n]}=(\Pi_\text{Lévy}^{[n]}(k),  k\in
\N)$ be the discrete  process associated with $\Pi_\text{Lévy}$ restricted
to the $n$ first integers until it reaches the absorbing state (which is
the  trivial  partition consisting  in  one  block)  and which  afterward
remains constant.

By construction,  and thanks to Theorem 3.2.1  in \cite{dlg:rtlpsbp}, we
can  deduce  that  under  $\N^{(1)}$, the  discrete  coalescent  process
$\Pi_\text{Lévy}^{[n]} $  is distributed as  $\Pi_\text{GW}^{[n]}$ under
$\bP_n$. In fact, we have the following stronger result.

\begin{theo}\label{th:main2}
We set $\alpha=\inv{\gamma}\in (1/2,1)$.
Under $\N^{(1)}$, the processes $(\Pi_\text{Lévy}^{[n]},n\in \N^*)$
associated with the Lévy tree with branching mechanism
$\psi(\lambda)=\lambda^\gamma$ are
distributed as $(\Pi_\text{dis}^{[n]}, n\in \N^*)$ associated with the Lévy
measure $\Lambda(dx)=(x/1-x)^\alpha \, dx$. 
\end{theo}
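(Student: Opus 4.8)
The plan is to bootstrap Theorem~\ref{th:main2} from the already-established one-dimensional marginal identity: by construction together with Theorem~3.2.1 of \cite{dlg:rtlpsbp}, for each fixed $n$ the process $\Pi_\text{L�vy}^{[n]}$ under $\N^{(1)}$ has the law of $\Pi_\text{GW}^{[n]}$ under $\bP_n$, which by Theorem~\ref{th:main} is $\Pi_\text{dis}^{[n]}$ for the $\beta(1+\alpha,1-\alpha)$-coalescent. So the only thing that remains is to promote this family of equalities ``one $n$ at a time'' to an equality of the \emph{joint} law of $(\Pi_\text{L�vy}^{[n]},n\in\N^*)$ with $(\Pi_\text{dis}^{[n]},n\in\N^*)$. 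The key observation is that both families are built from a single consistent object: on the coalescent side, $\Pi_\text{dis}^{[n]}$ is the discrete skeleton of $\Pi^{[n]}$, which is itself the restriction to $\lb 1,n\rb$ of the $\N^*$-valued $\Lambda$-coalescent $\Pi$; on the tree side, all the $\Pi_\text{L�vy}^{[n]}$ are read off from the \emph{same} pruned tree $(\ct_\theta,\theta\ge0)$ and the \emph{same} i.i.d.\ sequence $(U_i,i\in\N^*)$, just looking at the first $n$ leaves. Thus it suffices to identify the two underlying $\N^*$-indexed structures, or equivalently to check a compatibility/consistency property that pins down the joint law from the marginals.

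\textbf{First} I would make precise the consistency on each side. On the coalescent side, $\Pi^{[n]}$ is by definition the restriction of $\Pi^{[n+1]}$ (and of $\Pi$) to the first $n$ integers, so $\Pi_\text{dis}^{[n]}$ is a deterministic function (the ``remove the last index and delete repeated states'' map, made constant after absorption) of $\Pi_\text{dis}^{[n+1]}$; more importantly, the discrete chains for all $n$ are coupled through the single continuous-time process $\Pi$. On the tree side, the map that sends a partition of $\lb 1,n+1\rb$ to its restriction to $\lb1,n\rb$ applied to $\Pi_\text{L�vy}^{[n+1]}(\theta)$ gives $\Pi_\text{L�vy}^{[n]}(\theta)$ for every $\theta$, because whether $U_i$ and $U_j$ share a leaf of $\ct_\theta$ as common ancestor does not depend on the other $U_\ell$'s; hence again the whole family $(\Pi_\text{L�vy}^{[n]},n\in\N^*)$ is a deterministic functional of the pair $\bigl((\ct_\theta)_{\theta\ge0},(U_i)_{i\in\N^*}\bigr)$, and the continuous-time process $\Pi_\text{L�vy}$ restricts consistently.

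\textbf{Next}, I would argue that the joint law of a consistent family of discrete coalescent chains is determined by its marginals, via a projective-limit / Kolmogorov-extension argument: a probability measure on the space of $\N^*$-valued partition-chains (equivalently, on sequences of compatible $\lb1,n\rb$-chains) is uniquely determined by its finite-$n$ restrictions. Since for each $n$ the law of $\Pi_\text{L�vy}^{[n]}$ equals that of $\Pi_\text{dis}^{[n]}$ by the already-proven marginal statement, and since both $(\Pi_\text{L�vy}^{[n]})_n$ and $(\Pi_\text{dis}^{[n]})_n$ are consistent (restriction-compatible) families, their joint laws coincide. Equivalently and perhaps more transparently, one shows directly that $(\Pi_\text{L�vy}(\theta),\theta\ge0)$ is, under $\N^{(1)}$, a $\Lambda$-coalescent with $\Lambda(dx)=(x/(1-x))^\alpha\,dx$: this is essentially the content of \cite{ad:falpus} and \cite{dlg:rtlpsbp}, Theorem~3.2.1, computing the rate at which a tagged set of $k$ among $b$ blocks merges in terms of the mass $\Delta_x$ of a branching point with $b$ sampled subtrees below it and the exponential clock $E_x$; the Beta$(1+\alpha,1-\alpha)$ form of $\Lambda$ falls out of the stable mass distribution of the branching points of $\ct$ under the $\psi(\lambda)=\lambda^\gamma$ normalized excursion measure. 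Then $\Pi_\text{L�vy}^{[n]}$, being the discrete skeleton of the restriction of this $\Lambda$-coalescent to $\lb1,n\rb$, is by definition distributed as $\Pi_\text{dis}^{[n]}$, simultaneously and consistently in $n$.

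\textbf{The main obstacle} I expect is the book-keeping in the direct rate computation: one must check that the paintbox partition $\Pi_\text{L�vy}(\theta)$ really is Markov in $\theta$ and that its jump rates match \eqref{eq:lambda-bn}, which requires controlling how the masses $\Delta_x$ of the branching points above $\ct_\theta$ (together with their exponential cutting times $E_x$) govern which and how many blocks merge at a given pruning event, and then integrating against the L\'evy measure of the stable subordinator to recover $\int_0^1 u^{k-2}(1-u)^{b-k}(u/(1-u))^\alpha\,du$ up to the universal multiplicative constant; alternatively, if one prefers to stay purely on the discrete side, the obstacle is merely to state cleanly the restriction-compatibility of the maps $\Pi_\text{dis}^{[n+1]}\mapsto\Pi_\text{dis}^{[n]}$ and $\Pi_\text{L�vy}^{[n+1]}\mapsto\Pi_\text{L�vy}^{[n]}$ and to invoke the extension theorem --- which is routine once the consistency on the tree side is recorded. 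Given that Theorem~\ref{th:main} and Theorem~3.2.1 of \cite{dlg:rtlpsbp} are available, I would present the proof in the short ``consistency $+$ marginals'' form and relegate the rate computation to a remark or a reference.
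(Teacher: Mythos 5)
Your overall skeleton --- equality of the one-dimensional (in $n$) marginals plus restriction-consistency of both families, then a projective-limit argument --- is indeed how the joint statement is obtained: the paper's Corollary \ref{cor:GW-CRT} is itself a statement for fixed $n$, and the passage to the family $(\Pi_\text{L\'evy}^{[n]},n\in\N^*)$ rests, exactly as you say, on the fact that $\Pi^{[n]}_\text{dis}$ (resp.\ $\Pi^{[n]}_\text{L\'evy}$) is a deterministic restriction functional of $\Pi^{[n+1]}_\text{dis}$ (resp.\ of the single pair $((\ct_\theta)_{\theta\ge 0},(U_i)_{i\in\N^*})$). The genuine gap is in your first step. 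The marginal identity ``$\Pi_\text{L\'evy}^{[n]}$ under $\N^{(1)}$ has the law of $\Pi_\text{GW}^{[n]}$ under $\bP_n$'' does \emph{not} follow from Theorem 3.2.1 of \cite{dlg:rtlpsbp} alone: that theorem identifies the law of the static subtree $\hat T_0$ spanned by the sampled leaves, but says nothing about how the continuum pruning marks $(E_x, x\in\cb_\text{br}(\ct))$ project onto $\hat T_0$. The partition $\Pi_\text{L\'evy}^{[n]}(\theta)$ is determined by the first marked point on each path $\lb \emptyset,U_i\rb$, and these marks come both from the branching points of $\hat T_0$ (where the masses of $\ct$ accumulate) and from the infinitely many small branching points of $\ct$ lying along the branches of $\hat T_0$. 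Showing that the induced cutting time of a node $u$ of the skeleton $\tilde T_0$ satisfies precisely $\N[\zeta_u\ge\theta\mid\tilde T_0]=(1+\theta)^{1-k_u}$ --- i.e.\ reproduces the discrete pruning of Section \ref{sec:d-tree-valued} --- is the actual content of the paper's Proposition \ref{prop:discrete_pruning}, which combines the node marks and the branch marks of Theorem 6.1 of \cite{adh:pcrtst} and integrates over the exponential edge lengths. This computation cannot be relegated to a remark; it is the proof.

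Your proposed ``equivalent, more transparent'' alternative is not available. The paper explicitly records as an open conjecture that $\Pi_\text{L\'evy}$ under $\N^{(1)}$ is, even up to a random time change, a $\beta(1+\alpha,1-\alpha)$-coalescent; in particular it is not known (and under the normalized excursion measure, which conditions on $\sigma=1$, one should not expect) that $(\Pi_\text{L\'evy}(\theta),\theta\ge 0)$ is a $\Lambda$-coalescent with rates \reff{eq:lambda-bn}. The identification of the coalescence probabilities $\lambda_{n,k}/\lambda_n$ is carried out in the paper entirely on the discrete side (Lemmas \ref{lem:rate-coal} and \ref{lem:Pn->Pk}), at the level of the embedded jump chain, not of continuous-time rates; so the ``main obstacle'' you describe is not mere book-keeping but a statement that is, as far as the paper goes, unproved.
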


\begin{rem}
Although the process $\Pi_\text{L\'evy}$ is a continuous-time process
like $\Pi_\text{GW}$, it is not a coalescent process under $\N^{(1)}$ as
for instance the time of the
first coalescence event in $\Pi_\text{Lévy}^{[n]}$ is not
exponentially distributed, see Corollary \ref{cor:first_event}.

We conjecture that there exists a random time-change $(R(t),t\ge 0)$
such that the process $(\Pi_\text{Lévy}(R(t)),t\ge 0)$ is indeed  under
$\N^{(1)}$ a $\beta(1+\alpha,1-\alpha)$-coalescent, but we
have no guess on what this time change could be.
\end{rem}

\begin{rem}
   \label{rem:a=1-alpha}
   Let  us remark  that the  $\beta(1+\alpha,  1-\alpha)$-coalescent we
   obtain is also a  $\beta(2-a,a)$-coalescent (with $a=1-\alpha$) as in
   \cite{bbs:bcsrt} but  with a different range for  $a$. The difference
   between the two cases  is that in \cite{bbs:bcsrt} $\alpha\in (-1,0)$
   and the coalescent  process comes down from infinity  (i.e. for every
   positive time  $\theta$, the partition $\Pi(\theta)$  contains only a
   finite number of blocks) whereas  in our case $\alpha\in (1/2,1)$ the
   process always contains an infinite number of singletons (also called
   ``dust'').
\end{rem}

\begin{rem}
Let us remark that the pruning procedure described above is the same
as in \cite{m:ssfdstIIsn} used  to construct the Miermont's
self-similar fragmentation process (see also
\cite{ad:falpus}). However, the time reversal of the process
$\Pi_\text{Lévy}$ is not Miermont's fragmentation as once a sub-tree
is cut and discarded, it is no more considered in our construction
whereas it undergoes some others fragmentations in Miermont's
construction. There are still some strong connections. For instance,
the tree $\ct_\theta$ is linked with a tagged fragment in the
fragmentation, see \cite{ad:falpus} Theorem 1.5 and Proposition 1.7 for the distribution
of the tree $\ct_\theta$ and  for the
distribution of a tagged fragment in Miermont's fragmentation.
\end{rem}

\subsection{Number of cuts needed to isolate the root in a stable GW
  tree} 

Using the above link between Galton-Watson trees and
$\beta$-coalescents, known results in one field translate immediately
in the other field giving sometimes new results. In that direction,
we first focus on how known asymptotics on the number of coalescence
events yield new results on the number  of  cuts needed  to
isolate the  root in a stable GW  tree with $n$ leaves.  

The original problem of cutting randomly a rooted tree arises first in Meir and
Moon \cite{mm:cdrt}. Given a rooted tree $T_n$ with $n$ edges, select
an edge uniformly at random (notice that this is not exactly our
pruning procedure) and delete the subtree not containing the
root attached to this edge. On the remaining tree, iterate this
procedure until only the edge attached to the root is left. We denote
by $\tilde Z_n$ the number of edge-removals needed to isolate the root. The
problem is then to study asymptotics of this random number $\tilde Z_n$,
depending on the law of the initial tree $T_n$.

In the original paper \cite{mm:cdrt}, Meir and Moon considered Cayley
trees and obtained asymptotics for the first two moments of
$X_n$. Limits in distribution were then obtained, see for instance Panholzer
\cite{p:cdvst} for some simply generated trees, Drmota,
Iksanov, Möhle and Roesler \cite{dimr:ldncnirrrt} for random
recursive trees, Holmgren \cite{h:rrcbsc} for binary search trees,
Bertoin \cite{b:ft} for Cayley trees. In \cite{j:rcrdrt}, Janson
focuses on conditioned Galton-Watson trees associated with critical
offspring  distributions with finite  variance and proves that 
\[
\tilde Z_n/\sqrt{n}
\xrightarrow[n\rightarrow +\infty ]{(d)} \;
\tilde Z,
\]
where the random  variable  $\tilde Z$  has  Rayleigh  distribution  with  density
$x\expp{-x^2/2}\ind_{\{x>0\}}$, and can be explicitly constructed using a pruning procedure
on the Brownian continuum random tree (which corresponds to the cases
$\gamma=2$ in our setting), see \cite{ad:rpcrt}. In particular $\tilde
Z$ is distributed as  the height of  a random leaf  of the
Brownian continuum random tree. See also \cite{abbh:ctmc,bm:ctlgwtbcrt} for further work on
cutting randomly rooted trees.

Notice that the
reproduction  law  for stable  GW  tree  has  an infinite  variance  for
$\alpha\in (1/2,1)$, and the uniform pruning does not seem to be
adapted to isolate the root. For this reason, we consider the pruning
procedure developed in Section \ref{sec:framework} to tackle the
infinite variance case. So, let $Z_n$ be the number of cuts, using
this procedure, needed to isolate the root of a stable GW tree:
\[
Z_n=\inf\{k;\;  \Pi^{[n]}_\text{GW}(k)= \{\{1, \ldots, n\}\}\}.
\]
Notice that  for $r$-ary  trees, since all  the internal nodes  have the
same degree the cutting  procedure given in Section \ref{sec:framework},
corresponds to  choose an internal  node uniformly.

We immediately deduce from  asymptotics  of the number
of  coalescence events  in  $\beta$-coalescent (see Corollary 1
\cite{hm:sslnmc}, see also \cite{gim:abc}, Table 1 for a summary of
all the results concerning $\beta$-coalescents),   the
following  result  which extends part of  the  result  in
\cite{j:rcrdrt}  to GW tree with infinite variance of the reproduction
law. 

\begin{cor}
   \label{cor:momentZ} 
Let $\alpha=1/\gamma\in [1/2,1)$. 
We have the following convergence in distribution:
\[
n^{\alpha-1} Z_n\xrightarrow[n\rightarrow+\infty ] {(d) }Z,
\]
with the distribution of $Z$ characterized by, for $n\in \N^*$:
\[
\E\left[Z^n\right]=\alpha^n \frac{\Gamma(n+1)
  \Gamma(1-\alpha)}{\Gamma((n+1)(1-\alpha))}\cdot 
\]
\end{cor}

Let us insist on the fact that this corollary does not need any proof
as this is just a translation of known results on $\beta$-coalescents
using our links with GW trees, only the moment computation needs some
explanations and is done in  Section \ref{sec:proof-cut},

The distribution  of $Z$ corresponds to the  expected limit distribution
in the Conjecture that is stated at the end of the introduction in \cite{ad:farplt} for the number  of cuts needed to
isolate the  root in general GW  trees. (Notice that  in the conjecture,
one choose  an internal  node $x\in T$  with probability  proportional to
$k_x(T)$ whereas  in Section \ref{sec:framework} one  choose an internal
node  $x\in   T$  with  probability  proportional   to  $k_x(T)-1$.)   In
particular, $Z$  is distributed as  the height of  a random leaf  of the
normalized       Lévy      tree      with       branching      mechanism
$\psi(\lambda)=\lambda^\gamma$.

\subsection{Number of blocks in the last coalescence event}

On the other hand, we can use results on  GW
trees conditioned to have an infinite number of leaves (which is very
close to Kesten's result on GW tree conditionally on the non
extinction, see \cite{ck:rncpccgwta} Theorem 3.1 or \cite{ad:llcgwtisc} Proposition 4.6)
to get asymptotics on the number $B_n$ of blocks involved is the last coalescence
event of $\Pi^{[n]}$.

The proof of the following Proposition is given in Section
\ref{sec:proof-bn}. 
\begin{prop}
   \label{prop:cv-bn}
Let $\alpha=1/\gamma\in [1/2,1)$. 
We have the following convergence in distribution:
\[
B_n\xrightarrow[n\rightarrow+\infty ] {(d) }B,
\]
with the distribution of $B$ given by its generating function
$\varphi_\alpha(r)=\E\left[r^B\right]$, with for $r\in
[0,1]$: 
\begin{equation}
   \label{eq:def-f-a}
\varphi_\alpha(r)
= (1-\alpha)r \int_0^1 \frac{dx}{1-(1-x)^\alpha}
\left(\inv{(1-rx)^\alpha} -1\right).
\end{equation}
\end{prop}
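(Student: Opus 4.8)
The plan is to realize $B_n$, the number of blocks in the last coalescence event of $\Pi^{[n]}_{\text{GW}}$, directly in terms of the pruned GW tree, and then pass to the limit using the local (Kesten-type) convergence of the stable GW tree conditioned to have many leaves. By Theorem \ref{th:main}, $B_n$ is distributed as the number of blocks merged in the final coalescence of $\Pi^{[n]}_{\text{dis}}$ for the $\beta(1+\alpha,1-\alpha)$-coalescent; equivalently, in the tree picture, $B_n$ is the number of children of the last cut node, namely the root of $T$ under $\bP_n$ once all strictly-higher branch points have already been removed. So the first step is to argue that $B_n$ equals (in law) $k_\rho(\tilde T_n)$, where $\rho$ is the root and $\tilde T_n$ is obtained from $T$ under $\bP_n$ by the pruning procedure of Section \ref{sec:framework} stopped one step before the root is cut — or, more convenient for computation, to express the law of $B_n$ via the rates \eqref{eq:lambda-bn}: when the $\beta$-coalescent restricted to $[n]$ is in a state with $b$ blocks and this is the last event, it jumps to a single block, so the last-event block count $B_n$ has, conditionally on the penultimate block count being $b$, the distribution $\P(B_n=k\mid b)=\binom{b}{k}\lambda_{b,k}/\lambda_b$ for $2\le k\le b$.

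The second step is to control the distribution of the penultimate number of blocks $b$ as $n\to\infty$. Since $\alpha\in[1/2,1)$ the coalescent has dust and never comes down from infinity, so one expects $b\to\infty$ in probability; the clean way to see this through the tree is that the number of leaves above the root's first remaining child before the final cut tends to infinity, which follows from the convergence of the conditioned stable GW tree. Concretely, I would invoke the convergence (as in Kesten's theorem, here for the stable offspring law \eqref{eq:def-g}) of $T$ under $\bP_n$, suitably rooted/decomposed near the root, to the size-biased infinite stable tree $T^*$; the root of $T^*$ has a random number of children governed by the size-biased offspring distribution, and one distinguished child carries the infinite spine. Transferring the pruning to this limit object, the last cut in the discrete chain corresponds, in $T^*$, to cutting the root, whose offspring number is size-biased: $\P(k_\rho(T^*)=k)\propto k\,p_k$ where $p_k=[r^k]g(r)$ from \eqref{eq:def-g}. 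This identifies the candidate limit law of $B$ up to checking normalization.

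The third step is the computation: extract $p_k$ from $g(r)=r+\alpha(1-r)^{1/\alpha}$, form the size-biased law, and verify that its generating function equals $\varphi_\alpha$ in \eqref{eq:def-f-a}. Here the integral representation is the natural output: writing $1-(1-r)^\alpha$ and its derivatives as Beta-type integrals, one gets $\E[r^B]=(1-\alpha)r\int_0^1 \frac{dx}{1-(1-x)^\alpha}\bigl((1-rx)^{-\alpha}-1\bigr)$ after an exchange of sum and integral; I would also double-check $\varphi_\alpha(1)=1$ directly from \eqref{eq:def-f-a} as a sanity check. Finally, I would assemble the pieces: convergence in distribution of $B_n$ to $B$ follows from the convergence of the conditioned trees plus continuity of the relevant functional (the root offspring count is a local functional, hence continuous for the local topology), and the limit law is the size-biased stable offspring law whose generating function is \eqref{eq:def-f-a}.

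I expect the main obstacle to be making precise \emph{which} convergence of the conditioned GW tree is needed and that the pruning procedure is continuous for it: the pruning is defined through a global random choice (probabilities proportional to $k_x(T)-1$ over all internal nodes, with $L(T)$ in the denominator), so one must argue that the \emph{last} cut only sees a bounded neighborhood of the root in the limit, i.e. that the randomness of the cut localizes near the root as $n\to\infty$. This is exactly where the "dust" property ($\alpha\ge 1/2$, no coming down from infinity) and the exchangeability built into Theorem \ref{th:main} do the work, but spelling it out — rather than the Beta-integral computation, which is routine — is the delicate point.
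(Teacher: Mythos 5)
Your instinct to reduce $B_n$ to a local functional of the conditioned tree near the root, and to pass to the Kesten limit $T^*$, is exactly the paper's strategy. But the functional you identify is wrong, and the error propagates to a wrong limit law. The blocks of the partition just before the last cut are in bijection with the \emph{leaves} of the pruned tree $\cp_{\xi_\emptyset(T_n)-}(T_n)$, not with the children of the root: a child of the root whose own mark time exceeds $\xi_\emptyset$ still carries its entire pruned subtree at that instant, and each leaf of that subtree is a separate block. Since the last event by definition merges \emph{all} remaining blocks into one, $B_n$ equals the penultimate number of blocks, i.e.\ $B_n\overset{(d)}{=}L(\cp_{\xi_\emptyset(T_n)-}(T_n))$. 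This has two consequences for your write-up. First, your claim that the penultimate block count tends to infinity (from the ``dust'' property) contradicts the very statement you are proving, and your conditional law $\binom{b}{k}\lambda_{b,k}/\lambda_b$ describes a generic coalescence event, not one conditioned to be the last (conditioned on being last with $b$ blocks present, $k=b$ almost surely). Second, the limit of $B_n$ is \emph{not} the size-biased offspring law $\nu_g^*$ of the root: the generating function of $\nu_g^*$ is $rg'(r)=r-r(1-r)^{(1/\alpha)-1}$, which for $\alpha=1/2$ equals $r^2$, visibly different from $\varphi_{1/2}$ given by \reff{eq:def-f-a}.

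The paper's Lemma \ref{lem:bar-T} shows that $\cp_{\xi_\emptyset(T_n)-}(T_n)$ converges in law to a finite tree $\bar T$ in which the size-biased law appears only as the distribution of the root degree $k_\emptyset(\bar T)$; conditionally on $k_\emptyset(\bar T)$ and on the root's cut time $\xi$ (with $\P(\xi\ge\theta)=(1+\theta)^{1-k_\emptyset(\bar T)}$), the subtrees above the root are independent copies of $\cp_\xi(T)$ for an unconditioned GW tree, except for the spine child which is $\cp_\xi(T^*)$ for Kesten's tree. Then $B=L(\bar T)$ is a sum of leaf counts of these pruned subtrees, and \reff{eq:def-f-a} is obtained by computing the generating functions $h_\theta$ and $h_\theta^*$ of $L(\cp_\theta(T))$ and $L(\cp_\theta(T^*))$ via the pruned offspring generating function $g_\theta$ of \reff{eq:def-gq}, integrating over $\theta$, and changing variables. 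So the Beta-type integral is not routine bookkeeping on the offspring law: it encodes the random cut time of the root and the leaf counts of the surviving subtrees, which your argument omits entirely. To repair the proof you would need to replace the root-degree functional by the leaf count of the pruned tree at the root's cut time and carry out precisely this extra layer of computation.
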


See also \cite{ad:cbcpbt} for more results in this
direction when $\alpha=1/2$ including the number of singletons involved
in the last coalescence event as well as a closed form for $\varphi_{1/2}$.

\begin{rem}
After we first posted this paper on arXiv, H\'enard proved in
\cite{h:fl} Theorem 3.5 that Equation \reff{eq:def-f-a} remains valid for all
$\beta(1+\alpha,1-\alpha)$-coalescents with $\alpha\in (-1,1)$ (taking
the limit when $\alpha=0$).

For $\alpha=0$, the $\beta(1+\alpha,1-\alpha)$-coalescent corresponds to
the  Bolthausen-Sznitman   coalescent,  and  thus  $\varphi_0$   is  the
generating  function of  the asymptotic  number  of blocks  of the  last
coalescence   event   in   the  Bolthausen-Sznitman   coalescent   whose
distribution  is   given  in   Theorem  3.1   and  Proposition   3.2  of
\cite{gm:rrtbsc}.

As $\alpha$ goes down to $-1$, we recover the Kingman's coalescent as a
limit. We also get $\varphi_{-1}(r)=r^2$ and notice that
$\varphi_{-1}$ is trivially the
generating function of the number of blocks of the last (in fact all)
coalescence event in the Kingman's coalescent, as all the coalescence events
are binary. 
\end{rem}

\subsection{Organization of the paper}
Section \ref{sec:pruning} gives a representation of the pruning at node 
procedure for GW tree in continuous time motivated by
\cite{adh:pcrtst}. This procedure corresponds in fact  to the one presented in
Introduction, Section \ref{sec:framework}. Section \ref{sec:proof-1} is
devoted to the proof of  Theorem \ref{th:main}. Section
\ref{sec:proof-2} devoted to the proof of Theorem
  \ref{th:main2} is more technical as it relies on continuum random
Lévy trees and the pruning of such trees as developed in
\cite{ad:falpus}. Eventually Sections \ref{sec:proof-cut}
and \ref{sec:proof-bn} are devoted to the proofs of Propositions
\ref{cor:momentZ} and \ref{prop:cv-bn}. 

\section{Pruning at node of discrete GW  trees}
\label{sec:pruning}

\subsection{Discrete trees}

Let us recall here the formalism for ordered discrete trees. We set
\[
\cu=\bigcup _{n\ge 0}{(\N^*)^n}
\]
the set  of finite  sequences of positive  integers with  the convention
$(\N^*)^0=\{\emptyset\}$.  For  $u\in  \cu$  let  $|u|$  be  the  length  or
generation  of  $u$   defined  as  the  integer  $n$   such  that  $u\in
(\N^*)^n$. If $u$ and $v$ are  two sequences of $\cu$, we denote by $uv$
the concatenation of the two  sequences, with the convention that $uv=u$
if $v=\emptyset$ and  $uv=v$ if $u=\emptyset$.  The set  of ancestors of
$u$ is the set:
\begin{equation}
   \label{eq:Au}
A_u=\{v\in \cu; \text{there exists $w\in \cu$ such that $u=vw$}\}.
\end{equation}

A discrete tree $\bt$ is a subset of $\cu$ that satisfies:
\begin{itemize}
\item $\emptyset\in\bt$,
\item If  $u\in\bt$, then $A_u\subset \bt$. 
\item For every $u\in \bt$, there exists a non-negative integer
  $k_u(\bt)$ such that, for all positive integer $i$, $ui\in \bt$ iff $1\leq i\leq k_u(\bt)$. 
\end{itemize}

The integer $k_u(\bt)$ 
represents the number of offsprings of the node $u$ in the tree
$\bt$.  We define $\cl(\bt)$ the set of leaves of $\bt$ and $\cn(\bt)$ 
the set of internal nodes of $\bt$ by:
\[
\cl(\bt)=\{u\in \bt,\ k_u(\bt)=0\}
\quad\text{and}\quad 
\cn(\bt)=\bt\setminus \cl(\bt).
\]
Let $L(\bt)=\Card (\cl(\bt))$ be the number of leaves of the tree $\bt$,
and notice that:
\begin{equation}\label{eq:leaves-nodes}
L(\bt)-1= \sum_{u\in \cn(\bt)} (k_u(\bt)-1 ).
\end{equation}

We denote by  $\T$ the set of discrete trees and  by $\T_n=\{ \bt\in \T;
L(\bt)=n\}$ the set of discrete trees with $n$ leaves.

\subsection{A discrete tree-valued process}
\label{sec:d-tree-valued}

We consider the pruning procedure developed in
\cite{adh:pgwttvmp}. Let $\bt\in\T$.
Under some probability measure $\bP^\bt$, we consider
a family of marks $(\xi_u,u\in \cu)$ of independent non-negative real random
variables (possibly infinite) such that:
\begin{itemize}
   \item $\bP^\bt$-a.s. $\xi_u=+\infty
$ if $u\not \in
\bt$ or if $u\in \bt $ and $k_u(\bt)\in \{0,1\}$, 
\item $\bP^\bt(\xi_u\ge \theta)=(1+\theta)^{1-k_u(\bt)}$ 
if $u\in \bt$ and  $k_u(\bt)\geq 2$.
\end{itemize} 

At time $\theta$, we define the pruned tree $\cp_\theta(\bt)$ as the
sub-tree given by:
\[
\cp_\theta(\bt)=\{u\in \bt; \, \xi_v>\theta \text{ for all } v\in A_u,
\, v\neq u\}.
\]
In particular, we always have $\emptyset\in \cp_\theta(\bt)$.

For $u\in \cn(\bt)$,
let $D_{u}$ be the event that $u$ is marked first, that is:
$$D_u=\{\xi_u=\min_{v\in\cn(\bt)}\xi_v\}.$$

\begin{lem}
   \label{lem:Ax}
We suppose that $L(\bt)\ne 1$. Let $u\in \cn(\bt)$. We have:
\[
\bP^\bt(D_{u})= \frac{k_u(\bt)-1}{L(\bt)-1}\cdot
\]
\end{lem}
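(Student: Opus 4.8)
The plan is to compute $\bP^\bt(D_u)$ directly by exploiting the independence and the explicit tail formula for the $\xi_v$'s. First I would observe that only the internal nodes $v\in\cn(\bt)$ carry a finite variable $\xi_v$, and for such $v$ the variable $\xi_v$ has distribution function $\bP^\bt(\xi_v\le\theta)=1-(1+\theta)^{1-k_v(\bt)}$ on $[0,\infty)$, hence a density; in particular the $\xi_v$ are continuous random variables, so a.s. no two of them are equal and $(D_v,v\in\cn(\bt))$ form a partition of the probability space up to a null set. Thus $\sum_{v\in\cn(\bt)}\bP^\bt(D_v)=1$, which together with \reff{eq:leaves-nodes} is exactly the consistency check for the claimed formula.

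To get the formula itself, I would condition on $\xi_u=\theta$ and use independence:
\[
\bP^\bt(D_u)=\int_0^\infty \bP^\bt\Big(\min_{v\in\cn(\bt),\,v\neq u}\xi_v>\theta\Big)\,\bP^\bt(\xi_u\in d\theta)
=\int_0^\infty \Big(\prod_{v\in\cn(\bt),\,v\neq u}(1+\theta)^{1-k_v(\bt)}\Big)\,\bP^\bt(\xi_u\in d\theta).
\]
Now the product over $v\neq u$ of $(1+\theta)^{1-k_v(\bt)}$ is $(1+\theta)^{-\sum_{v\neq u}(k_v(\bt)-1)}=(1+\theta)^{-(L(\bt)-1)+(k_u(\bt)-1)}=(1+\theta)^{k_u(\bt)-L(\bt)}$ by \reff{eq:leaves-nodes}. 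The density of $\xi_u$ is $\bP^\bt(\xi_u\in d\theta)=(k_u(\bt)-1)(1+\theta)^{-k_u(\bt)}\,d\theta$ (differentiating $1-(1+\theta)^{1-k_u(\bt)}$). Multiplying, the integrand becomes $(k_u(\bt)-1)(1+\theta)^{-L(\bt)}\,d\theta$, and $\int_0^\infty (1+\theta)^{-L(\bt)}\,d\theta=\frac{1}{L(\bt)-1}$ (using $L(\bt)\ge 2$, which holds since $u$ is an internal node so $\bt$ has at least two leaves). This yields $\bP^\bt(D_u)=\frac{k_u(\bt)-1}{L(\bt)-1}$.

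There is essentially no serious obstacle here; the only points requiring a little care are the bookkeeping with \reff{eq:leaves-nodes} to collapse the product over $v\neq u$ into a single power of $(1+\theta)$, and the trivial edge cases. If $L(\bt)=1$ the tree is a single leaf, $\cn(\bt)=\emptyset$, and the statement is vacuous; otherwise $L(\bt)\ge 2$, the integral $\int_0^\infty(1+\theta)^{-L(\bt)}\,d\theta$ converges, and everything goes through. One should also note that the product and the exchange of summation/integration are justified since all factors are nonnegative (Tonelli) and the number of internal nodes is finite whenever $\bt$ is finite; the same computation extends to infinite $\bt$ provided $L(\bt)<\infty$, which is the only case in which $D_u$ has a chance to be nontrivial. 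So the proof is a one-line integral once the exponent is simplified via \reff{eq:leaves-nodes}.
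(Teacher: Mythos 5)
Your proof is correct and follows essentially the same route as the paper: condition on $\xi_u=\theta$, use independence to turn $\bP^\bt(\xi_v>\theta\ \forall v\neq u)$ into a power of $(1+\theta)$, integrate against the density $(k_u(\bt)-1)(1+\theta)^{-k_u(\bt)}\,d\theta$, and simplify the exponent via \reff{eq:leaves-nodes}. The only cosmetic difference is that you substitute $L(\bt)-1$ for $\sum_{v\in\cn(\bt)}(k_v(\bt)-1)$ before integrating rather than after.
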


This lemma implies that the cutting procedure given in Section
\ref{sec:framework}, corresponds to the successive states of the process
$(\cp_\theta(\bt), \theta\geq 0)$.

\begin{proof}
We have, using \reff{eq:leaves-nodes} for the last equality:
\begin{align*}
\bP^\bt(D_{u})
&=\bP^\bt(\xi_{u} \leq  \xi_{v} \quad \forall v\neq u, v\in \cn(\ct))\\
&= \bE^\bt\left[(1+\xi_{u})^{-\sum_{v\neq u, v \in \cn(\bt)} (k_v(\bt)-1
    )}\right]\\ 
&= (k_{u}(\bt) -1) \int_{[0, +\infty )} (1+\theta)^{-\sum_{v\in \cn(\bt)}
  (k_v(\bt)-1 )-1}\; d\theta\\ 
&= \frac{k_u(\bt)-1}{\sum_{v\in\cn(\bt)} (k_v(\bt)-1 )}\\
&= \frac{k_u(\bt)-1}{L(\bt)-1}\cdot
\end{align*}
\end{proof}

\subsection{Construction of the partition-valued process
  $\Pi_\text{GW}^{[n]}$}
\label{sec:GW-partition}

Let $\alpha\in [1/2,1)$.
Recall that the function $g$ defined by \reff{eq:def-g} is the generating
function of a probability measure $\nu_g$ on $\N$. We denote by
$G_g(dT)$ the distribution on $\T$ of the critical GW  tree
with offspring distribution $\nu_g$. We will denote 
by $\bP$ the probability measure on $\T\times [0,+\infty ]^\cu$:
\[
\bP(dT,d\xi)=G_g(dT)\bP^T(d\xi). 
\]

Under $\bP$, the random tree $T$ is a GW  tree whose 
offspring distribution $\nu_g$ has generating function $g$ given by
\reff{eq:def-g}.  According to Propositions 2.1 and 3.2 in \cite{adh:pcrtst},
$(\cp_\theta(T), \theta\geq 0)$ is a Markov process and $\cp_\theta(T)$
is  a GW  tree whose reproduction 
law has generating function $g_\theta$, with:
\[
g_\theta(r)=1+(1+\theta) \left[g\left(\frac{r}{1+\theta}\right)-
  g\left(\inv{1+\theta}\right)\right].
\]
Notice that:
\begin{equation}
   \label{eq:def-gq}
g_\theta(r)=r+\alpha \frac{(1-r+\theta)^{1/\alpha}
  -\theta^{1/\alpha}}{(1+\theta)^{(1/\alpha)-1}}\cdot
\end{equation}

For every positive integer $n$, we set:
\[
\bP_n(\bullet)=\bP( \bullet \bigm| L(T)=n).
\]
Under
$\bP_n$, the distribution of the tree $T$ is given by the following
formula (see \cite{dlg:rtlpsbp}, Theorem 3.3.3, or \cite{m:nfst}), for
$\bt\in \T_n$:
\begin{equation}\label{eq:proba-tree}
\bP_n(T=\bt)=n!\left(\prod _{v\in
    \cn(\bt)}\frac{p_{k_v(\bt)}}{k_v(\bt)!}\right)\frac{\alpha^{n-1}
  \Gamma(1-\alpha)}{\Gamma(n-\alpha)} 
\end{equation}
where $p_1=0$ and, for $k\geq 2$, $p_k=\val{(1-\gamma)(2-\gamma)\cdots
  (k-\gamma)}$.

Let  $n\in\N^*$.  Let  $T$  be  a random  tree  distributed as  $\bP_n$.
Conditionally   on   $T$,   we   define  a   uniform   random   labeling
$U_1,\ldots,U_n$ of  the leaves of  $T$, independently of  the variables
$(\xi_u,u\in T)$.  Recall  the set of ancestors defined in \reff{eq:Au}
and the pruning procedure $\cp_\theta$ introduced
in Section \ref{sec:d-tree-valued}.   We define the equivalence relation
$\calr_\theta^{[n]}$ on $\{1,2,\ldots, n\}$ by:
$i\calr_\theta^{[n]}  j$   if   $A_{U_i}\bigcap    A_{U_j}   \bigcap
\cl(\cp_\theta(T))$ is non empty, that is $U_i$ and $U_j$ have a leaf of
$\cp_\theta(T)$ as common ancestor.   Then, for every $\theta\ge 0$, let
$\hat \Pi_\text{GW}^{[n]}(\theta)$  be the equivalence  classes of the
equivalence  relation  $\calr_\theta^{[n]}$    of  the $n$  first
integers. Let $\Pi_\text{GW}^{[n]}=(\Pi_\text{GW}^{[n]}(k), k\in
\N)$    be    the    discrete    process    associated   with    $\hat
\Pi_\text{GW}^{[n]}= (\hat
\Pi_\text{GW}^{[n]}(\theta),  \theta\geq 0)$ until  it reaches  the absorbing
state  (which is  the trivial  partition  consisting in  one block)  and
afterward the discrete process remains constant.

We end this section with an elementary lemma which will be used in the
proof of Theorem~\ref{th:main}.

\begin{lem}
 \label{lem:k0}
  We have for $n\geq 2$:
\begin{equation}
   \label{eq:ln}
 \bE_n\left[k_\emptyset(T) -1\right]= \frac{1-\alpha}{\alpha} \, 
\frac{\Gamma\left(1-\alpha\right)}{\Gamma\left(\alpha\right)}\, 
\frac{\Gamma\left(n-1+\alpha\right)}{\Gamma\left(n-\alpha\right)}\cdot
\end{equation}
\end{lem}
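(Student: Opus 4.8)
The plan is to compute $\bE_n[k_\emptyset(T)-1]$ directly from the explicit tree distribution \reff{eq:proba-tree}. First I would write
\[
\bE_n\left[k_\emptyset(T)-1\right]=\sum_{j\ge 2}(j-1)\,\bP_n(k_\emptyset(T)=j),
\]
and decompose a tree $\bt\in\T_n$ with $k_\emptyset(\bt)=j$ into its $j$ subtrees $\bt_1,\dots,\bt_j$ rooted at the children of $\emptyset$, with leaf counts $n_1,\dots,n_j$ summing to $n$. Using the product form of \reff{eq:proba-tree} (the factor $\prod_{v\in\cn(\bt)}p_{k_v(\bt)}/k_v(\bt)!$ factorizes over the subtrees once the root factor $p_j/j!$ is pulled out, and the $n!$ combinatorial factor splits into $\binom{n}{n_1,\dots,n_j}$ times $\prod n_i!$ for the labeling), I would reduce the sum over trees to a sum over compositions $(n_1,\dots,n_j)$ of $n$ weighted by $\prod_i a_{n_i}$, where $a_m:=\sum_{\bt\in\T_m}\big(\prod_{v\in\cn(\bt)}p_{k_v(\bt)}/k_v(\bt)!\big)$ is, up to the normalizing constant in \reff{eq:proba-tree}, the total $\bP$-mass of trees with $m$ leaves. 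Concretely, from \reff{eq:proba-tree} with $j=1$ trivially and in general,
\[
a_m \;=\; \frac{1}{m!}\cdot\frac{\Gamma(m-\alpha)}{\alpha^{m-1}\Gamma(1-\alpha)},
\]
so the whole computation becomes a manipulation of the generating function $A(z)=\sum_{m\ge 1}a_m z^m$.

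The key observation is that $A(z)$ is governed by the fixed-point equation for GW trees: $A(z)=z\cdot g^{-1}$-type relation, or more directly, since $g(r)=r+\alpha(1-r)^{1/\alpha}$ is the offspring generating function and the $p_k$ are (up to sign and the $k!$) its Taylor coefficients, one checks that the function $h(z):=\sum_{m\ge1}a_m z^m$ satisfies $h(z)=z+\sum_{k\ge2}\tilde p_k h(z)^k$ where $\tilde p_k=p_k/k!$ are the coefficients of $1-g(1-r)=r-\alpha r^{1/\alpha}$ in the relevant normalization; equivalently $h$ is the inverse function of $r\mapsto r-(r-\alpha r^{1/\alpha})=\alpha r^{1/\alpha}$-adjusted expression. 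I would nail this down by directly verifying from the closed form $a_m = \Gamma(m-\alpha)/(m!\,\alpha^{m-1}\Gamma(1-\alpha))$ that $h(z)$ is the functional inverse of $z\mapsto \alpha^{1-1/\alpha}\,z^{1/\alpha}$ composed appropriately — in fact a cleaner route is to recognize $\sum_m \frac{\Gamma(m-\alpha)}{m!\,\Gamma(1-\alpha)}z^m = 1-(1-z)^{\alpha}$ by the generalized binomial theorem, hence $A(z)=\sum_m a_m z^m$ has an explicit closed form after reinserting the $\alpha^{1-m}$ factor, namely $A(z)$ solves $z = A - \alpha A^{1/\alpha}\alpha^{1-1/\alpha}=A-\alpha^{2-1/\alpha}A^{1/\alpha}$; I'd pick whichever normalization makes the bookkeeping shortest.

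Granting the functional equation, the quantity I want is extracted as follows. With $c_n:=\bP_n$-normalizing constant $=\alpha^{n-1}\Gamma(1-\alpha)/\Gamma(n-\alpha)=1/(n!\,a_n)$, the sum over trees with root degree $j$ gives
\[
\bP_n(k_\emptyset(T)=j)=\frac{n!\,a_n}{?}\cdot\frac{p_j}{j!}\sum_{n_1+\cdots+n_j=n}\prod_i a_{n_i}
=\frac{1}{a_n}\cdot\frac{p_j}{j!}\,[z^n]A(z)^j,
\]
so $\sum_{j\ge2}(j-1)\bP_n(k_\emptyset(T)=j)=\frac{1}{a_n}[z^n]\big(z A'_{\!g}(A(z))-\text{const}\big)$ where $A'_{\!g}$ denotes differentiation of the offspring series; using $g$'s explicit form this collapses to $\frac{1}{a_n}[z^n]$ of a power of $(1-A(z))$ or of $z$ itself, which is an elementary residue/Gamma-function computation yielding exactly the right-hand side of \reff{eq:ln}. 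The main obstacle I anticipate is purely organizational: getting the normalization constants and the $p_j$-versus-$\tilde p_j$ conventions to line up, and correctly identifying the closed form of $A(z)$ (equivalently, proving the one-line identity $\sum_{m\ge1}\frac{\Gamma(m-\alpha)}{m!\,\Gamma(1-\alpha)}z^m=1-(1-z)^\alpha$ and threading the $\alpha$-powers through). Once $A(z)$ is in hand, the extraction of $\bE_n[k_\emptyset(T)-1]$ is a routine coefficient computation, and the ratio $\Gamma(n-1+\alpha)/\Gamma(n-\alpha)$ in \reff{eq:ln} is the tell-tale sign that the answer is $[z^n]$ of something like $A(z)^{1/\alpha}$ divided by $[z^n]$ of $A(z)$ itself, i.e. $\Gamma(n-1+\alpha)/\Gamma(n-\alpha)$ up to the stated constant $\frac{1-\alpha}{\alpha}\frac{\Gamma(1-\alpha)}{\Gamma(\alpha)}$.
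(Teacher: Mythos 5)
Your plan is essentially the paper's own proof in different notation: your weight generating function $A(z)=\sum_m a_m z^m$ equals $h(z/\alpha)$, where $h(t)=\bE\left[t^{L(T)}\right]=1-(1-t)^\alpha$ is the leaf-count generating function the paper works with under the unconditioned law, your root-degree decomposition $A=z+P(A)$ with $P=g-g(0)$ is exactly the paper's branching-property identity $g(h(t))-h(t)=g(0)(1-t)$, and both arguments finish by extracting the $n$-th coefficient of $(1-t)^{1-\alpha}$ and dividing by $\bP(L(T)=n)$. The only slip is a factor of $\alpha$ in the binomial identity you invoke, which should read $\sum_{m\geq 1}\frac{\alpha\,\Gamma(m-\alpha)}{m!\,\Gamma(1-\alpha)}z^m=1-(1-z)^\alpha$; since you explicitly flag the $\alpha$-power bookkeeping as the point to nail down, this does not affect the soundness of the plan.
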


\begin{proof}
 We consider the generating function of $(k_\emptyset(T), L(T))$ under
 $\bP$, that is   $H(s,t)=\bE\left[s^{k_{\emptyset}(T)}
    t^{L(T)}\right]$. 
Using  the branching
  property of GW  trees, we have:
\begin{equation}
   \label{eq:n_0,L}
H(s,t)
=\bE\left[s^{k_{\emptyset}(T)}
  \bE[t^{L(T)}]^{k_{\emptyset}(T)}\ind_{\{k_{\emptyset}(T)
    \neq    0\}}\right] + t\bP(k_{\emptyset}(T)=0).
\end{equation}
Notice that
$g(s)=\bE\left[s^{k_{\emptyset}(T)} \right]=H(s,1)$. We
set $h(t)=H(1,t)= \bE\left[ t^{L(T)}\right]$ 
 the generating function of $L(T)$. 
So that \reff{eq:n_0,L} becomes:
\begin{equation}
   \label{eq:Hgh}
H(s,t)=g(s\,h(t)) - g(0) (1-t).
\end{equation}
Taking $s=1$ in
\reff{eq:Hgh}, we get:
\begin{equation}
   \label{eq:gen-leaf}
g(h(t))-h(t)= g(0)(1-t). 
\end{equation}
Using expression \reff{eq:def-g}, we get:
\[
h(t)=1- (1-t)^\alpha \quad\text{and}\quad
H(s,t)=s\, h(t) + \alpha(1-s\, h(t))^{1/\alpha} - \alpha(1-t).
\]
We deduce that:
\begin{align*}
\bE\left[k_\emptyset(T) t^{L(T)}\right]
=\frac{\partial H}{\partial s}(1,t)
&=h(t)
-h(t)(1-h(t))^{(1/\alpha)-1}\\
&= \bE\left[t^{L(T)}\right] - \left[1- (1-t)^\alpha \right]
  (1-t)^{1-\alpha}\\
&= \bE\left[t^{L(T)}\right] - (1-t)^{1-\alpha}+1-t.
\end{align*}
This gives:
\[
\bE\left[(k_\emptyset(T)-1) t^{L(T)}\right]= - (1-t)^{1-\alpha}+1-t.
\]
For $n\geq 2$, we get:
\begin{align*}
\bE\left[(k_\emptyset(T)-1) \ind_{\{L(T)=n\}}\right]
&=\inv{n!} \left(\frac{d^n}{dt^n} \bE\left[(k_\emptyset(T)-1)
    t^{L(T)}\right]\right)_{|_{t=0}}\\ 
&=\inv{n!} \left(1-\alpha\right) \prod_{k=0}^{n-2}
\left(\alpha +k \right)\\
&=\inv{n!}\left(1-\alpha\right)
\frac{\Gamma\left(n-1+\alpha\right)}{\Gamma\left(\alpha\right)}\cdot
\end{align*}
We also get for $n\geq 2$:
\[
\bP(L(T)=n)
=\inv{n!} h^{(n)}(0)
=\inv{n!} \alpha  \prod_{k=1}^{n-1}
\left(k-\alpha  \right)
=\inv{n!}\alpha
\frac{\Gamma\left(n-\alpha\right)}{\Gamma\left(1-\alpha\right)}\cdot
\]
We deduce that:
\[
\bE_n\left[k_\emptyset(T)-1\right]
= \frac{\bE\left[(k_\emptyset(T)-1)
    \ind_{\{L(T)=n\}}\right]}{\bP(L(T)=n)}
= \frac{1-\alpha}{\alpha} \frac{\Gamma(1-\alpha)}{\Gamma(\alpha)}
\frac{\Gamma(n-1+\alpha)}{\Gamma(n-\alpha)}\cdot
\]
\end{proof}

\section{Proof of Theorem \ref{th:main}}
\label{sec:proof-1}

Let   $\alpha\in   [1/2,1)$    and   $\Lambda$   given   by
\reff{eq:def-Lambda}.   Notice  that  the  probability  that  the  first
coalescence   event  for   $\Pi^{[n]}_\text{dis}$  corresponds   to  the
collision  of  $k$  given  blocks is  $\lambda_{n,k}/\lambda_{n}$,  with
$\lambda_{n,k}$     and     $\lambda_n$     given    respectively     by
\reff{eq:lambda-bn} and \reff{eq:lambda-n}.

Theorem    \ref{th:main}   is    a   direct    consequence    of   Lemma
\ref{lem:rate-coal}  which states  that the  probability that  the first
coalescence event for $\Pi^{[n]}_\text{GW}$ corresponds to the collision
of  $k$  given  blocks  is  $\lambda_{n,k}/\lambda_{n}$,  and  of  Lemma
\ref{lem:Pn->Pk}, which  states that after the  first coalescence event,
the law of the pruned tree under $\bP_n$ conditionally given that it has
$k$ leaves is exactly $\bP_k$.

The proof of Lemme \ref{lem:rate-coal} (resp. \ref{lem:Pn->Pk}) is given
in Section \ref{sec:rate} (resp. \ref{sec:last}). 

\subsection{Computation of the coalescence rates}
\label{sec:rate}

We first give an intermediate lemma. 
For  $\alpha\in (0,1)$ and $\lambda>\alpha-1$, we set: 
\begin{equation}
   \label{eq:def-phi}
 \phi_{1+\alpha,1-\alpha}(\lambda)
=\int_0^1 \left(1- (1-x)^\lambda \right)
x^{\alpha-2}(1-x)^{-\alpha} \; dx.
\end{equation}

\begin{lem}
   \label{lem:f(l)}
For  $\alpha\in (0,1)$ and $\lambda> \alpha-1$, we have:
\begin{equation}
   \label{eq:f(l)-2}
 \phi_{1+\alpha,1-\alpha}(\lambda)=\lambda \frac{\Gamma(\alpha)
   \Gamma(\lambda+1-\alpha)}{(1-\alpha) \Gamma(\lambda+1)}\cdot 
\end{equation}
\end{lem}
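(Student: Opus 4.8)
The plan is to evaluate the integral in \reff{eq:def-phi} directly by recognizing it as a Beta-type integral after an integration by parts. First I would write
\[
\phi_{1+\alpha,1-\alpha}(\lambda)=\int_0^1\bigl(1-(1-x)^\lambda\bigr)\,x^{\alpha-2}(1-x)^{-\alpha}\,dx,
\]
and note that the integrand is integrable near $x=0$ because $1-(1-x)^\lambda\sim\lambda x$ there (this is exactly why one needs $\lambda>\alpha-1$ to control the behaviour near $x=1$, where $x^{\alpha-2}(1-x)^{-\alpha}$ is bounded but $(1-x)^{\lambda-\alpha}$ must be integrable). Integrating by parts with $u=1-(1-x)^\lambda$, $dv=x^{\alpha-2}(1-x)^{-\alpha}\,dx$ is one option, but a cleaner route is to differentiate under the integral sign or, more simply, to expand and use the Beta function. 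I would split as
\[
\phi_{1+\alpha,1-\alpha}(\lambda)=\int_0^1 x^{\alpha-2}(1-x)^{-\alpha}\,dx-\int_0^1 x^{\alpha-2}(1-x)^{\lambda-\alpha}\,dx,
\]
except each piece individually diverges at $x=0$, so instead I would keep them together and use the integral representation
\[
1-(1-x)^\lambda=\lambda\int_0^x (1-y)^{\lambda-1}\,dy
\]
and exchange the order of integration.

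After swapping, the inner integral over $x$ from $y$ to $1$ of $x^{\alpha-2}(1-x)^{-\alpha}$ is still not convergent at $x=0$ unless $y>0$, which is fine since $y$ ranges in $(0,1)$; but the cleanest presentation is to instead substitute and reduce to a single Beta integral. Concretely, I would use the identity, valid for $\alpha\in(0,1)$,
\[
x^{\alpha-2}\bigl(1-(1-x)^\lambda\bigr)=\frac{d}{dx}\left[-\frac{x^{\alpha-1}}{1-\alpha}\bigl(1-(1-x)^\lambda\bigr)\right]\cdot(1-\alpha)^{-1}\cdot(\text{something}),
\]
— more honestly, integrate by parts with $u=1-(1-x)^\lambda$ and $dv=x^{\alpha-2}\,dx$, so $v=\frac{x^{\alpha-1}-1}{\alpha-1}$ (choosing the antiderivative that vanishes at $x=0$, which is what makes the boundary term disappear). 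Then the boundary term at $x=1$ gives $v(1)\cdot u(1)=0$ since $v(1)=0$, and at $x=0$ it gives $0$ because $u(x)\sim\lambda x$ kills the mild singularity of $v$; what remains is
\[
\phi_{1+\alpha,1-\alpha}(\lambda)=\frac{\lambda}{1-\alpha}\int_0^1\bigl(1-x^{\alpha-1}\bigr)(1-x)^{\lambda-1}\,dx.
\]

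The last integral is now elementary: it equals $B(1,\lambda)-B(\alpha,\lambda)=\frac1\lambda-\frac{\Gamma(\alpha)\Gamma(\lambda)}{\Gamma(\lambda+\alpha)}$. Wait — I need to recheck the exponent on $(1-x)$; with $dv=x^{\alpha-2}(1-x)^{-\alpha}dx$ the antiderivative is not elementary, so the correct split is to keep $(1-x)^{-\alpha}$ inside $u$ is wrong too. Let me instead integrate by parts taking $u=1-(1-x)^\lambda$, $dv=x^{\alpha-2}(1-x)^{-\alpha}dx$ and simply use that $\int x^{\alpha-2}(1-x)^{-\alpha}dx$ is the incomplete Beta function; differentiating $u$ gives $du=\lambda(1-x)^{\lambda-1}dx$, producing an integral of the incomplete Beta function against $(1-x)^{\lambda-1}$, which can be recognized via Fubini as a full Beta integral $B(\alpha-1,\lambda-\alpha+1)$ after interchange — and this is where the parameter constraint $\lambda>\alpha-1$ is exactly what guarantees convergence. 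Collecting terms and simplifying the Gamma factors using $\Gamma(\alpha)=(\alpha-1)\Gamma(\alpha-1)$ should yield \reff{eq:f(l)-2}. The main obstacle is purely bookkeeping: choosing the integration-by-parts split (or the Fubini interchange) so that no spurious divergent pieces appear, and then correctly matching the Gamma-function identities; there is no conceptual difficulty, but one must be careful that every intermediate integral is genuinely convergent on the stated range of $\lambda$.
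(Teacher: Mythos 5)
There is a genuine gap: neither of the two concrete routes you end up proposing actually evaluates the integral. Your displayed identity
\[
\phi_{1+\alpha,1-\alpha}(\lambda)=\frac{\lambda}{1-\alpha}\int_0^1\bigl(1-x^{\alpha-1}\bigr)(1-x)^{\lambda-1}\,dx
\]
comes from integrating by parts with $dv=x^{\alpha-2}\,dx$, i.e.\ it computes $\int_0^1(1-(1-x)^\lambda)x^{\alpha-2}\,dx$ and silently drops the weight $(1-x)^{-\alpha}$; it is false for $\phi_{1+\alpha,1-\alpha}$. Your fallback --- take $v(x)=-\int_x^1t^{\alpha-2}(1-t)^{-\alpha}\,dt$ as antiderivative, integrate by parts, then apply Fubini --- is circular: the interchange gives
\[
\lambda\int_0^1(1-x)^{\lambda-1}\Bigl(\int_x^1t^{\alpha-2}(1-t)^{-\alpha}\,dt\Bigr)dx
=\int_0^1t^{\alpha-2}(1-t)^{-\alpha}\bigl(1-(1-t)^\lambda\bigr)\,dt,
\]
which is $\phi_{1+\alpha,1-\alpha}(\lambda)$ again. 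The identification with $B(\alpha-1,\lambda-\alpha+1)$ is only formal, since that Beta integral diverges for $\alpha-1<0$; writing $\phi=B(\alpha-1,1-\alpha)-B(\alpha-1,\lambda+1-\alpha)$ with $\Gamma$-continued values happens to give the right answer, but it subtracts two divergent integrals and so proves nothing as stated. You also never treat the range $\alpha-1<\lambda\le 0$, where the integrand changes sign and the constraint $\lambda>\alpha-1$ is exactly what makes the integral converge at $x=1$.

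The fix is close to what you wrote. The paper splits the integrand as $\bigl((1-x)^{-\alpha}-1\bigr)x^{\alpha-2}+\bigl(1-(1-x)^{\lambda-\alpha}\bigr)x^{\alpha-2}$: the first piece is a convergent, $\lambda$-independent constant $I$, and the second is precisely the integral to which your elementary integration by parts ($dv=x^{\alpha-2}dx$, $v=x^{\alpha-1}/(\alpha-1)$) applies, yielding $-\frac{1}{1-\alpha}+\frac{\lambda-\alpha}{1-\alpha}B(\alpha,\lambda-\alpha)$ for $\lambda>\alpha$. The constant $I$ is then pinned down not by another Beta computation but by evaluating $\phi_{1+\alpha,1-\alpha}(1)=B(\alpha,1-\alpha)$ directly (since $1-(1-x)=x$), which forces $I=1/(1-\alpha)$; finally the identity is extended from $\lambda\ge\alpha$ to all $\lambda>\alpha-1$ by analyticity of both sides. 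So your integration-by-parts instinct is the right one, but it must be applied after the decomposition that removes the $(1-x)^{-\alpha}$ weight into a separate convergent constant, and you still need the $\lambda=1$ anchor and the analytic-continuation step to cover the full stated range.
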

Notice that for $\lambda>0$, \reff{eq:f(l)-2} reduces to:
\begin{equation}
   \label{eq:f(l)}
 \phi_{1+\alpha,1-\alpha}(\lambda)= \frac{\Gamma(\alpha)
   \Gamma(\lambda+1-\alpha)}{(1-\alpha) \Gamma(\lambda)}\cdot 
\end{equation}

\begin{proof}
We set:
\[
I=  \int_0^1 \left((1-u)^ {-\alpha}-1 \right) \; u^{\alpha-2}\, du. 
\]
Notice that $I$ is finite and $ \phi_{1+\alpha,1-\alpha}(\alpha)=I$. 
For $\lambda>\alpha$, using an integration by part, we have:
\begin{align*}
   \phi_{1+\alpha,1-\alpha}(\lambda)
&=\int_0^1 \left(1- (1-x)^\lambda \right)
x^{\alpha-2}(1-x)^{-\alpha} \; dx\\
&=\int_0^1 \left((1-x)^{-\alpha}-1 \right)
x^{\alpha-2}\; dx + \int_0^1 \left(1- (1-x)^{\lambda-\alpha} \right)
x^{\alpha-2} dx\\
&=I- \inv{1-\alpha} + \frac{\lambda-\alpha}{1-\alpha}
\int_0^1 (1-x)^{\lambda-\alpha-1}x^{\alpha-1}\; dx\\
&= I- \inv{1-\alpha} + \frac{\Gamma(\alpha)
  \Gamma(\lambda+1-\alpha)}{(1-\alpha) \Gamma(\lambda)}\cdot
\end{align*}
We now compute $I$. For $\lambda=1$, we also have:
\[
   \phi_{1+\alpha,1-\alpha}(1)
=\int_0^1 
x^{\alpha-1}(1-x)^{-\alpha} \; dx 
= \Gamma(\alpha)\Gamma(1-\alpha) .
\]
We deduce that:
\[
I- \inv{1-\alpha} + \frac{\Gamma(\alpha)
  \Gamma(2-\alpha)}{(1-\alpha) \Gamma(1)}=
\phi_{1+\alpha,1-\alpha}(1)
= \Gamma(\alpha)\Gamma(1-\alpha) .
\]
This readily implies that $I=1/(1-\alpha)$ and thus \reff{eq:f(l)-2}
holds for $\lambda\geq \alpha$. Then uses that the right-hand sides of
\reff{eq:def-phi} and \reff{eq:f(l)-2} are analytic for
$\lambda>\alpha-1$ to get that \reff{eq:f(l)-2} also holds for
$\lambda>\alpha-1$. 
\end{proof}

Recall that $\lambda_{n,k}$
and $\lambda_n$ are given respectively by \reff{eq:lambda-bn}
and \reff{eq:lambda-n}, for $\Lambda$ given by  \reff{eq:def-Lambda}.

\begin{lem}
\label{lem:lnk/ln}
Let $\alpha\in [1/2,1)$. We have for $2\leq k\leq n$:
\begin{equation}\label{eq:lambda}
\frac{\lambda_{n,k}}{\lambda_n}=\frac{1-\alpha}{\Gamma(\alpha+1)}
\frac{\Gamma(k+\alpha-1)\Gamma(n-k-\alpha+1)}{\Gamma(n-\alpha)}
\frac{1}{n-1}\cdot   
\end{equation}
\end{lem}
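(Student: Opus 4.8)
The plan is to compute both $\lambda_{n,k}$ and $\lambda_n$ explicitly as Beta/Gamma-function expressions and then take the quotient. The measure is $\Lambda(dx)=(x/(1-x))^\alpha\,dx$, so from \reff{eq:lambda-bn},
\[
\lambda_{n,k}=\int_0^1 u^{k-2}(1-u)^{n-k}\,u^\alpha(1-u)^{-\alpha}\,du
=\int_0^1 u^{k-2+\alpha}(1-u)^{n-k-\alpha}\,du
= B(k-1+\alpha,\,n-k+1-\alpha),
\]
which is $\Gamma(k-1+\alpha)\Gamma(n-k+1-\alpha)/\Gamma(n)$; note the integral converges for $\alpha\in[1/2,1)$ and $2\le k\le n$ since $k-2+\alpha>-1$ and $n-k-\alpha>-1$. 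So the numerator of \reff{eq:lambda} is immediate up to a harmless $\Gamma(n)=(n-1)!$ factor that will cancel or rescale.

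The real work is the denominator $\lambda_n=\sum_{k=2}^n\binom{n}{k}\lambda_{n,k}$. First I would observe that the binomial sum can be resummed inside the integral:
\[
\lambda_n=\int_0^1 \Big(\sum_{k=2}^n\binom{n}{k}u^{k}(1-u)^{n-k}\Big)u^{\alpha-2}(1-u)^{-\alpha}\,du
=\int_0^1\big(1-(1-u)^n-nu(1-u)^{n-1}\big)u^{\alpha-2}(1-u)^{-\alpha}\,du.
\]
Now I recognize this in terms of the function $\phi_{1+\alpha,1-\alpha}$ of \reff{eq:def-phi}: writing $1-(1-u)^n-nu(1-u)^{n-1}=\big(1-(1-u)^n\big)-n\big(1-(1-u)^{n-1}\big)+n\big(1-(1-u)^n\big)\cdot(\text{correction})$ — more cleanly, $nu(1-u)^{n-1}=n(1-u)^{n-1}-n(1-u)^n$, so the integrand's bracket equals $1-(1-u)^n - n(1-u)^{n-1}+n(1-u)^n = \big(1-(1-u)^n\big)+n\big(1-(1-u)^n\big)-n\big(1-(1-u)^{n-1}\big) -1\cdot 0$; I will bookkeep this carefully so that $\lambda_n$ becomes a linear combination of $\phi_{1+\alpha,1-\alpha}(n)$ and $\phi_{1+\alpha,1-\alpha}(n-1)$. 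Concretely I expect
\[
\lambda_n=(1+n)\,\phi_{1+\alpha,1-\alpha}(n)-n\,\phi_{1+\alpha,1-\alpha}(n-1),
\]
or a similar combination, after which Lemma \ref{lem:f(l)} gives closed forms $\phi_{1+\alpha,1-\alpha}(\lambda)=\lambda\Gamma(\alpha)\Gamma(\lambda+1-\alpha)/((1-\alpha)\Gamma(\lambda+1))$ for both $\lambda=n$ and $\lambda=n-1$.

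Finally I would combine: plugging the closed forms in and using $\Gamma(\lambda+1)=\lambda\Gamma(\lambda)$ repeatedly, the two terms share the factor $\Gamma(\alpha)\Gamma(n-\alpha)/((1-\alpha)\Gamma(n+1))$ up to linear-in-$n$ prefactors, and these should collapse — I anticipate $\lambda_n=\Gamma(\alpha)\Gamma(n-\alpha)/\big((1-\alpha)\Gamma(n)\big)\cdot(\text{something})$, or more likely after cancellation $\lambda_n = \Gamma(\alpha)\,\Gamma(n-\alpha)\,/\,\big((1-\alpha)\,\Gamma(n-1)\,\Gamma(n)\big)$-type expression. Then
\[
\frac{\lambda_{n,k}}{\lambda_n}=\frac{\Gamma(k-1+\alpha)\Gamma(n-k+1-\alpha)/\Gamma(n)}{\lambda_n}
\]
and I match it against the claimed right-hand side $\dfrac{1-\alpha}{\Gamma(\alpha+1)}\dfrac{\Gamma(k+\alpha-1)\Gamma(n-k-\alpha+1)}{\Gamma(n-\alpha)}\dfrac{1}{n-1}$, using $\Gamma(\alpha+1)=\alpha\Gamma(\alpha)$ and $\Gamma(n)=(n-1)\Gamma(n-1)$ to reconcile the $1/(n-1)$ and the $\Gamma(\alpha)$ versus $\Gamma(\alpha+1)$ discrepancies. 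The main obstacle is the resummation/bookkeeping step for $\lambda_n$: getting the exact linear combination of $\phi_{1+\alpha,1-\alpha}(n)$ and $\phi_{1+\alpha,1-\alpha}(n-1)$ right, since an off-by-one or a dropped $n$-factor there propagates into the final formula. Everything after that is routine Gamma-function algebra.
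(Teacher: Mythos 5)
Your approach is the same as the paper's: compute $\lambda_{n,k}$ as a Beta integral, resum the binomial sum under the integral to get $\lambda_n=\int_0^1\bigl(1-(1-u)^n-nu(1-u)^{n-1}\bigr)u^{\alpha-2}(1-u)^{-\alpha}\,du$, evaluate via Lemma \ref{lem:f(l)}, and divide. The numerator computation is correct as you wrote it.

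The one step you defer --- the ``bookkeeping'' for $\lambda_n$ --- is exactly where your tentative formula is wrong. Writing $nu(1-u)^{n-1}=n(1-u)^{n-1}-n(1-u)^{n}$, the bracket becomes $(1-n)\bigl(1-(1-u)^n\bigr)+n\bigl(1-(1-u)^{n-1}\bigr)$, so the correct combination is
\[
\lambda_n=(1-n)\,\phi_{1+\alpha,1-\alpha}(n)+n\,\phi_{1+\alpha,1-\alpha}(n-1),
\]
not $(1+n)\phi_{1+\alpha,1-\alpha}(n)-n\phi_{1+\alpha,1-\alpha}(n-1)$; your guess evaluates to $\frac{\Gamma(\alpha)\Gamma(n-\alpha)}{(1-\alpha)\Gamma(n)}\bigl((2-\alpha)n-\alpha\bigr)$, which does not produce the $1/(n-1)$ in \reff{eq:lambda}. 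With the correct signs one gets $\lambda_n=(n-1)\frac{\alpha}{1-\alpha}\frac{\Gamma(\alpha)\Gamma(n-\alpha)}{\Gamma(n)}$ and the quotient follows via $\Gamma(1+\alpha)=\alpha\Gamma(\alpha)$. The paper avoids this sign-juggling by not converting the second term into a $\phi$ value at all: it keeps $n\int_0^1 u^{\alpha-1}(1-u)^{n-1-\alpha}\,du=n\,\Gamma(\alpha)\Gamma(n-\alpha)/\Gamma(n)$ as a plain Beta integral and only uses $\phi_{1+\alpha,1-\alpha}(n)$ for the first term, which is slightly less error-prone. Either route works once the coefficients are right, so the gap is purely the unfinished (and, as guessed, incorrect) linear combination.
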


\begin{proof}
We have
\begin{align*}
\lambda_{n,k} & =\int_0^1u^{k-2}(1-u)^{n-k}\Lambda (du)\\
 & =\int_0^1u^{k-2+\alpha}(1-u)^{n-k-\alpha}du\\
& =\beta(k+\alpha-1,n-k-\alpha+1)\\
& =\frac{\Gamma (k+\alpha-1)\Gamma(n-k-\alpha+1)}{\Gamma(n)},
\end{align*}
and 
\[
\lambda_n=\sum_{k=2}^n\left(\begin{array}{c}n\\k\end{array}\right)
\lambda_{n,k}=
\int_0^1(1-(1-u)^n-nu(1-u)^{n-1})u^{-2}\Lambda(du).
\]
Then using notations  \reff{eq:def-phi} and 
\reff{eq:f(l)}, we deduce that:
\begin{align*}
   \lambda_n
&= \phi_{1+\alpha,1-\alpha}(n)
- n \int_0^1 u^{\alpha-1} (1-u)^ {n-1-\alpha} \, du\\
&= \frac{\Gamma(\alpha)
   \Gamma(n+1-\alpha)}{(1-\alpha) \Gamma(n)}- n
 \frac{\Gamma(\alpha)\Gamma(n-\alpha)}{\Gamma(n)} \\
&= \left(\frac{n-\alpha}{1-\alpha} - n \right)
\frac{\Gamma(\alpha)\Gamma(n-\alpha)}{\Gamma(n)} \\
&=  (n-1)\frac{\alpha}{1-\alpha}
\frac{\Gamma(\alpha)\Gamma(n-\alpha)}{\Gamma(n)}\cdot 
\end{align*}
The expression obtained for $\lambda_{n,k}$ then gives the result.
\end{proof}

If $\bt_1$ and $\bt_2$ are two discrete trees and $u\in \cl(\bt_1)$ is a
leaf of $\bt_1$, we shall  denote by $\bt_1\circledast_u \bt_2$ the tree
obtained by grafting  the tree $\bt_2$ on the leaf  $u$ of $\bt_1$, that
is:
\begin{equation}
   \label{eq:tut}
\bt_1\circledast_u \bt_2=\bt_1\cup\{uv,\ v\in \bt_2\}.
\end{equation}

\begin{lem}
\label{lem:rate-coal}
Let $\alpha\in  [1/2,1)$. The probability  under $\bP_n$ that  the first
coalescence event  in $\Pi_\text{GW}^{[n]} $  is the coalescence  of $k$
given integers into one block is $\lambda_{n,k}/\lambda_n$.
\end{lem}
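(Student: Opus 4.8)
The plan is to rewrite the probability in question as an expectation over the Galton--Watson tree $T$, and then to evaluate that expectation via the branching property, recycling the computations already made for Lemma~\ref{lem:k0}.

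\emph{Step 1 (reduction to a tree functional).} Given $T$, the first cut node $x_1$ is chosen with probability $(k_{x_1}(T)-1)/(L(T)-1)=(k_{x_1}(T)-1)/(n-1)$ (see Section~\ref{sec:framework} and Lemma~\ref{lem:Ax}), independently of the uniform labelling of the leaves; and the first coalescence event merges precisely the integers labelling the leaves of $T$ that are descendants of $x_1$ (there are at least two of them, since $\nu_g(1)=0$ forces every node to have outdegree $0$ or $\ge 2$ $\bP$-a.s.). For a tree $\bt$ and a node $x$, write $\ell_x(\bt)$ for the number of leaves of $\bt$ that are descendants of $x$, and put
\[
N_{n,k}=\bE_n\!\left[\sum_{x\in\cn(T):\ \ell_x(T)=k}\bigl(k_x(T)-1\bigr)\right].
\]
Conditionally on the shape of $T$, the labels borne by the descendant leaves of a fixed node with $\ell_x(T)=k$ form a uniformly chosen $k$-subset of $\{1,\dots,n\}$; averaging over the labelling and over $T$ therefore gives, for $k$ given integers,
\[
\bP_n(\text{these }k\text{ integers coalesce at the first event})=\frac{N_{n,k}}{(n-1)\binom nk}.
\]

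\emph{Step 2 (branching decomposition of $N_{n,k}$).} This is the key step. A plane tree $\bt\in\T_n$ with a distinguished internal node $x$ satisfying $\ell_x(\bt)=k$ corresponds bijectively to a triple $(\bt',u,\bt'')$, where $\bt''\in\T_k$ is the subtree of $\bt$ rooted at $x$, $\bt'\in\T_{n-k+1}$, $u\in\cl(\bt')$ marks the position of $x$, and $\bt=\bt'\circledast_u\bt''$; moreover $G_g(\bt)=\alpha^{-1}\,G_g(\bt')\,G_g(\bt'')$, since the leaf $u$ of $\bt'$ accounts for the factor $\nu_g(0)=g(0)=\alpha$, which is absent from $\bt$. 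Summing over $u\in\cl(\bt')$ produces a factor $L(\bt')=n-k+1$, and as $k_x(\bt)=k_\emptyset(\bt'')$ one obtains
\[
\bE\!\left[\sum_{x\in\cn(T):\ \ell_x(T)=k}\bigl(k_x(T)-1\bigr)\ind_{\{L(T)=n\}}\right]
=\frac{n-k+1}{\alpha}\,\bP\bigl(L(T)=n-k+1\bigr)\,\bE\!\left[\bigl(k_\emptyset(T)-1\bigr)\ind_{\{L(T)=k\}}\right],
\]
which also covers the boundary case $k=n$ ($\bt'$ being then the one-leaf tree, $\bt=\bt''$). Dividing by $\bP(L(T)=n)$,
\[
N_{n,k}=\frac{n-k+1}{\alpha}\,\frac{\bP(L(T)=n-k+1)\,\bP(L(T)=k)}{\bP(L(T)=n)}\;\bE_k\!\left[k_\emptyset(T)-1\right].
\]

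\emph{Step 3 (bookkeeping).} It then remains to substitute $\bP(L(T)=m)=\frac{\alpha}{m!}\,\frac{\Gamma(m-\alpha)}{\Gamma(1-\alpha)}$ (established in the proof of Lemma~\ref{lem:k0}, and also valid at $m=1$) together with the value of $\bE_k[k_\emptyset(T)-1]$ from Lemma~\ref{lem:k0}. The factorials combine into $\binom nk$, the Gamma functions telescope, and one is left with
\[
N_{n,k}=\binom nk\,\frac{1-\alpha}{\Gamma(\alpha+1)}\,\frac{\Gamma(k+\alpha-1)\,\Gamma(n-k+1-\alpha)}{\Gamma(n-\alpha)},
\]
so that $N_{n,k}/\bigl((n-1)\binom nk\bigr)$ coincides with $\lambda_{n,k}/\lambda_n$ as computed in Lemma~\ref{lem:lnk/ln}. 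The only point requiring genuine care is Step~2: checking that the decomposition really is a bijection for all $2\le k\le n$ (watching the case $k=n$, and the fact that only trees with every outdegree $\ne 1$ carry positive $G_g$-mass) and keeping exact track of the $\nu_g(0)$ factor. Steps~1 and~3 are routine, the latter being essentially the same $\Gamma$-function computation already carried out for Lemma~\ref{lem:k0}.
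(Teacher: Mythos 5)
Your proof is correct and follows essentially the same route as the paper: the same grafting decomposition $T=\bt'\circledast_u\bt''$ at the first cut node, the same reduction to $\bE_k\left[k_\emptyset(T)-1\right]$ via Lemma \ref{lem:k0}, and the same final comparison with Lemma \ref{lem:lnk/ln}. The only (minor) difference is that you factorize under the unconditioned measure $G_g$ using $G_g(\bt'\circledast_u\bt'')=\nu_g(0)^{-1}G_g(\bt')G_g(\bt'')$ together with the law of $L(T)$, whereas the paper substitutes the explicit conditioned formula \reff{eq:proba-tree} and regroups it into $\bP_{n-k+1}(T=\bt')\bP_k(T=\bt'')$; the two computations are identical in substance.
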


\begin{proof}
Let $A_k$ be the event that the first coalescence event corresponds 
to the $k$ first integers merging together. By exchangeability, the lemma
is proved as soon as we check  that $\bP_n(A_k)=\lambda_{n,k}/\lambda_n$.

The event $A_k$ is  realized, if and only if:
\begin{itemize}
\item The initial tree $T$ is of the form $\bt_1\circledast_u \bt_2$ for
  some $\bt_2\in \T_k$ and $\bt_1\in \T_{n-k+1}$ and $u\in \cl(\bt_1)$. 
\item The leaves of $\bt_2$ are labeled from $1$ to $k$ (and
  therefore, the leaves of $\bt_1$ except $u$ are labeled from $k+1$ to
  $n$). This occurs with probability $\frac{k!(n-k)!}{n!}$.
\item The first  chosen node of $\bt_1\circledast_u \bt_2$  is $u$. This
  occurs    according   to    Lemma   \ref{lem:Ax}    with   probability
  $\frac{k_\emptyset(\bt_2)-1}{n-1}$.
\end{itemize}

Thus, using \reff{eq:proba-tree} for the probability of
having a given tree, we have:
\begin{align*}
\bP_n(A_k) 
& =\sum_{\overset{\bt_1\in\T_{n-k+1}}{\overset{\bt_2\in \T_k}{u\in
  \cl(\bt_1)}}}\bP_n(T=\bt_1\circledast_u \bt_2)\frac{k!(n-k)!}{n!}
\frac{k_\emptyset(\bt_2)-1}{n-1}\\ 
& =\sum_{\overset{\bt_1\in\T_{n-k+1}}{\overset{\bt_2\in \T_k}{u\in
  \cl(\bt_1)}}} n!\left(\prod_{v\in\cn(\bt_1\circledast_u
    \bt_2)}\frac{p_{k_v(\bt_1\circledast_u
    \bt_2)}}{k_v(\bt_1\circledast_u \bt_2)!}\right)\frac{\alpha^{n-1}\Gamma(1-\alpha)}{\Gamma(n-\alpha)}\frac{k!(n-k)!}{n!}
  \frac{k_\emptyset(\bt_2)-1}{n-1}\\
&= (n-k+1)\sum_{\overset{\bt_1\in\T_{n-k+1}}{\bt_2\in \T_k}}
  \frac{n!}{k!(n-k+1)!}\bP_{n-k+1}(T=\bt_1)\bP_k(T=\bt_2)\\
& \qquad\qquad
{\frac{\alpha^{n-1}\Gamma(1-\alpha)}{\Gamma(n-\alpha)}\frac{\Gamma(n-k-\alpha+1)}{\alpha^{n-k}\Gamma(1-\alpha)}\frac{\Gamma(k-\alpha)}{\alpha^{k-1}\Gamma(1-\alpha)}\frac{k!(n-k)!}{n!}
  \frac{k_\emptyset(\bt_2)-1}{n-1}}\\
& =\frac{\Gamma(n-k-\alpha+1)\Gamma(k-\alpha)}{\Gamma(n-\alpha)\Gamma(1-\alpha)}\frac{1}{n-1}\bE_k\left[k_\emptyset(T)-1\right].
\end{align*}
We then use Lemma \ref{lem:k0} and Lemma \ref{lem:lnk/ln} to conclude. 
\end{proof}

\subsection{Law of the tree after the first coalescence event}
\label{sec:last}
Let $S$ be the time of the first coalescence event and recall that
$\cp_S(T)$ denote the pruned tree at the first coalescence
event.

\begin{lem}
   \label{lem:Pn->Pk}
Let $\bt\in \T_k$. We have:
\begin{equation}
   \label{eq:PSn->k}
\bP_n(\cp_S(T)=\bt\bigm| L(\cp_S(T))=k)=\bP_k(T=\bt).
\end{equation}
\end{lem}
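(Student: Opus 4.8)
The plan is to decompose the pruned tree $\cp_S(T)$ according to how it is built from the original tree $T$. When the first coalescence event occurs at the marked node $x_1$, the tree $T$ splits as $T=\bt_1\circledast_u\bt_2$, where $\bt_1=\cp_S(T)$ is the part containing the root (with $u=x_1$ now a leaf of $\bt_1$) and $\bt_2$ is the subtree hanging above $x_1$. So I would write, for $\bt\in\T_k$,
\[
\bP_n(\cp_S(T)=\bt)=\sum_{\substack{\bt_2\in\T_{n-k+1}\\ u\in\cl(\bt)}}
\bP_n\bigl(T=\bt\circledast_u\bt_2,\ x_1=u\bigr),
\]
the index $\bt_2$ having $n-k+1$ leaves since grafting merges the leaf $u$ of $\bt$ with the root of $\bt_2$. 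Each summand factors, via \reff{eq:proba-tree}, into a product over $\cn(\bt)$ and a product over $\cn(\bt_2)$ (note $\cn(\bt\circledast_u\bt_2)=\cn(\bt)\cup\{uv:v\in\cn(\bt_2)\}$ and $u\notin\cn(\bt\circledast_u\bt_2)$ precisely because $u$ is a leaf of $\bt$ but the root of $\bt_2$, whose offspring number is $k_u(\bt_2)\geq 2$, so the node contributes a factor $p_{k_\emptyset(\bt_2)}/k_\emptyset(\bt_2)!$). The probability that $x_1=u$ given $T=\bt\circledast_u\bt_2$ is $(k_\emptyset(\bt_2)-1)/(n-1)$ by Lemma \ref{lem:Ax}, and one must also weight by the probability $k!(n-k)!/n!$ that the leaves are labelled so that those of $\bt_2$ come last — actually, since we are not fixing which integers merge, we should keep the labelling uniform and simply sum; I would instead compute $\bP_n(\cp_S(T)=\bt,\ L(\cp_S(T))=k)$ as a sum over $\bt_2\in\T_{n-k+1}$ without a labelling factor, using that labels are assigned uniformly and independently.

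Carrying this out, the product of Gamma-factors from \reff{eq:proba-tree} rearranges as
\[
\bP_n(\cp_S(T)=\bt,\ L(\cp_S(T))=k)
=C(n,k)\,\bP_k(T=\bt)\cdot\bE_{n-k+1}\!\left[k_\emptyset(T)-1\right],
\]
for an explicit constant $C(n,k)$ depending only on $n$ and $k$ (the prefactor $\alpha^{n-1}\Gamma(1-\alpha)/\Gamma(n-\alpha)$ from the tree $\bt\circledast_u\bt_2$, divided by the corresponding prefactors for $\bt$ and $\bt_2$, times combinatorial factors) — and the key point is that the $\bt$-dependence is entirely through $\bP_k(T=\bt)$. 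This mirrors exactly the computation already done in the proof of Lemma \ref{lem:rate-coal}, where the same sum over $\bt_1\in\T_{n-k+1}$ and $\bt_2\in\T_k$ appeared with the roles of the two factors swapped; here $\bt=\bt_1$ is the part kept and $\bt_2$ is integrated out, whereas there $\bt_2$ was the part integrated via $\bE_k[k_\emptyset(T)-1]$. Summing over $\bt\in\T_k$ gives $\bP_n(L(\cp_S(T))=k)=C(n,k)\,\bE_{n-k+1}[k_\emptyset(T)-1]$, and dividing yields
\[
\bP_n(\cp_S(T)=\bt\mid L(\cp_S(T))=k)=\bP_k(T=\bt),
\]
which is \reff{eq:PSn->k}.

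The main obstacle is bookkeeping: correctly identifying which node of $\bt\circledast_u\bt_2$ plays the role of the cut node and verifying that the set of internal nodes and the offspring numbers factor cleanly across the grafting point — in particular that the leaf $u$ of $\bt$ becomes an internal node of $\bt\circledast_u\bt_2$ carrying exactly the factor $p_{k_\emptyset(\bt_2)}/k_\emptyset(\bt_2)!$, so that $\prod_{v\in\cn(\bt\circledast_u\bt_2)}p_{k_v}/k_v! = \bigl(\prod_{v\in\cn(\bt)\setminus\{u\}}p_{k_v(\bt)}/k_v(\bt)!\bigr)\bigl(\prod_{w\in\cn(\bt_2)}p_{k_w(\bt_2)}/k_w(\bt_2)!\bigr)$. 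Once the factorization is set up, the rest is a Gamma-function simplification identical in spirit to Lemma \ref{lem:rate-coal}, and one does not even need the closed form of $\bE_{n-k+1}[k_\emptyset(T)-1]$ from Lemma \ref{lem:k0} since it cancels in the conditional probability. A secondary subtlety is making the labelling argument rigorous: I would argue that conditionally on the unlabelled shape $T=\bt\circledast_u\bt_2$ and on $x_1=u$, the labels are still a uniform random bijection, so that the conditional law of $\cp_S(T)$ as a labelled tree with $k$ leaves is the uniform labelling of $\bt$, consistent with $\bP_k$.
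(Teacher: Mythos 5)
Your proposal is correct and follows essentially the same route as the paper: decompose $T=\bt\circledast_u\bs$ with $\bs\in\T_{n-k+1}$ and $u\in\cl(\bt)$ the first chosen node, apply Lemma \ref{lem:Ax} and \reff{eq:proba-tree} to factor the sum, and conclude by observing that the prefactor of $\bP_k(T=\bt)$ does not depend on $\bt$ and hence must equal $\bP_n(L(\cp_S(T))=k)$. (One typo: $u$ \emph{is} an internal node of $\bt\circledast_u\bt_2$ — you presumably meant $u\notin\cn(\bt)$ — but the displayed factorization you give is the correct one.)
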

\begin{proof}
Let $\bt\in \T_k$. We obtain $\bt$ just after the first coalescence event
if $T$ is of the form $\bt\circledast_u\bs $ for some $s\in \T_
{n-k+1}$, $u\in \cl(\bt)$ and $u$ is the first chosen internal
node. This gives:
\begin{align*}
\bP_n(\cp_S(T)=\bt) & = \sum_{\overset{u\in \cl(\bt)}{\bs\in
    \T_{n-k+1}}}\bP_n(T=\bt\circledast_u\bs)\frac{k_\emptyset(\bs)-1}{n-1}\\
& =k \sum_{\bs\in \T_{n-k+1}}n!
\left(\prod_{v\in\cn(\bt)}\frac{p_{k_v(\bt)}}{k_v(\bt)!}
\prod_{v\in\cn(\bs)}\frac{p_{k_v(\bs)}}{k_v(\bs)!}
\right)\frac{\alpha^{n-1}\Gamma(1-\alpha)}{\Gamma
    (n-\alpha)}\frac{k_\emptyset(\bs)-1}{n-1}\\
& =k \sum_{\bs\in
  \T_{n-k+1}}\frac{n!}{k!(n-k+1)!}\bP_k(T=\bt)\bP_{n-k+1}(T=\bs)\\
&\qquad\qquad\frac{\alpha^{n-1}\Gamma(1-\alpha)}{\Gamma
    (n-\alpha)}\frac{\Gamma(k-\alpha)}{\alpha^{k-1}\Gamma(1-\alpha)}\frac{\Gamma(n-k+1-\alpha)}{\alpha
    ^{n-k}\Gamma(1-\alpha)}\frac{k_\emptyset(\bs)-1}{n-1}\\
&
=\frac{n!}{(k-1)!(n-k+1)!}\frac{\Gamma(n-k+1-\alpha)\Gamma(k-\alpha)}{\Gamma(n-\alpha)\Gamma(1-\alpha)}\\
& \qquad\qquad\qquad\qquad\inv{n-1}\bE_{n-k+1}[k_\emptyset(T)-1]\bP_k(T=\bt).
\end{align*}
As the term in front of  $\bP_k(T=\bt)$ does not depend on $\bt$, it has
to  be equal  to $\bP_n(L(\cp_S(T))=k)$  and  therefore \reff{eq:PSn->k}
holds.
\end{proof}

\section{Pruning of rooted real trees and proof of Theorem
  \ref{th:main2} }
\label{sec:proof-2}
The aim of this section is to use the pruning
procedure for  Lévy trees developed in \cite{ad:falpus} to give a consistent
representation of the family of coalescent processes $(\hat
\Pi^{[n]}_\text{GW}, n\in \N^*)$, see Corollary \ref{cor:GW-CRT} and
thus deduce Theorem \ref{th:main2}.

\subsection{The CRT framework}

\subsubsection{Real trees}

Real trees have been introduced first in the field of geometric group
theory (see for instance \cite{dmt:tto}) and then used later for
defining continuum random trees (the framework first appeared in
\cite{epw:rprtrgr}). A real tree is a metric space $(\ct,d)$ satisfying the following two
properties for every $x,y\in\ct$:
\begin{itemize}
\item (unique geodesic) There is a unique isometric map $f_{x,y}$ from
  $[0,d(x,y)]$ into $\ct$ such that $f_{x,y}(0)=x$ and
  $f_{x,y}(d(x,y))=y$.
\item (no loop) If $\varphi$ is a continuous injective map from
  $[0,1]$ into $\ct$ such that $\varphi(0)=x$ and $\varphi(1)=y$, then
$$\varphi([0,1])=f_{x,y}([0,d(x,y)]).$$
\end{itemize}
A rooted real tree is a real tree with a distinguished vertex denoted
$\emptyset$ and called the root.

For every $x,y\in\ct$, we denote by $\lb x,y\rb$ the range of the map
$f_{x,y}$ (i.e. the only path in the tree that links $x$ to $y$) and we set
$\lb x,y\lb=\lb x,y\rb\setminus\{y\}$.

If $\ct$ is  a rooted real tree, for $x\in\ct$, we  define the degree of
$x$,  denoted  by  $n_x$,  as  the number  of  connected  components  of
$\ct\setminus\{x\}$.  The leaves of $\ct$ is $\cl(\ct)=\{x\in
\ct\setminus \{\emptyset\}; \, n_x=1\}$. 
If $n_x\ge 3$, we say that $x$ is a branching point of $\ct$.  We denote
by $\cb_\text{br}(\ct)$ the set of branching points of $\ct$. The height
of $\ct$ is $H_\text{max}(\ct)=\sup\{d(\emptyset,x); \; x\in \ct\}$.
Let $(x_i, i\in I)$ be a family of elements of $\ct$, we define their
most recent common ancestor denoted by $\text{MRCA}(x_i, i\in I)$ as the
element $x$ of $\ct$ such that $\lb \emptyset,x\rb=\bigcap _{i\in I} \lb
\emptyset,x_i\rb$.

A weighted rooted real tree $(\ct, d, \bm)$ is a rooted real tree
$(\ct, d)$ endowed with a $\sigma$-finite measure $\bm$ called the
mass measure.

\subsubsection{Stable L\'evy tree}

Set $\psi(\lambda)=\lambda^\gamma$  with $\gamma\in(1,2)$.  We  refer to
\cite{dlg:pfalt} and  \cite{adh:etiltvp} for the existence  of a measure
$\N[d\ct]$ on the set of  weighted locally compact rooted real tree such
that $\ct$ is  under $\N[d\ct]$ a Lévy tree  associated with the branching
mechanism $\psi$. For  the Lévy tree $(\ct, d,  \bm)$, $\N[d\ct]$ -a.e.,
the mass measure  has support $\cl(\ct)$ and has  no atom.
Furthermore, $\N[d\ct]$-a.e., all the branching points of the tree are
of infinite degree. Following  \cite{dlg:pfalt},
there exists  a local  time process $(\ell^a,  a\geq 0)$ with  values on
finite  measures on  $\ct$, which  is càdlàg  for the  weak  topology on
finite measures on $\ct$ and such that $\N^\psi[d\ct]$-a.e.:
\[
\bm(dx) = \int_0^\infty \ell^a(dx) \, da,
\]
$\ell^0=0$,  $\inf\{a > 0  ; \ell^a  = 0\}=\sup\{a  \geq 0  ; \ell^a\neq
0\}=H_{\text{max}}(\ct)$    and    for    every    fixed    $a\ge    0$,
$\N^\psi[d\ct]$-a.e. the  measure $\ell^a$ is supported  on $\{x\in \ct;
d(\emptyset,x)=a\}$  and  the   real  valued  process  $(\langle\ell^a,1
\rangle  , a\geq  0)$ is  distributed as  a continuous  state branching
process  (CSBP)  with branching  mechanism  $\psi$  under its  canonical
measure. In particular, as the total  size of a critical CSBP is finite,
we get that $\N$-a.e. $\sigma=\bm(\ct)$ is finite.

The set $\{  d(\emptyset,x),\ x\in \mathrm{Br}(\ct) \}$ coincides
$\N^\psi$-a.e.  with  the set  of  discontinuity  times  of the  mapping
$a\mapsto   \ell^a$.   Moreover,    $\N^\psi$-a.e.,   for   every   such
discontinuity    time     $b$,    there    is     a    unique    $x\in
\cb_\text{br}(\ct)$    such     that    $d(\emptyset,x)=b$    and
$\Delta_x>0$, such that:
\[ 
\ell^b = \ell^{b-} + \Delta_x \delta_{x}, 
\]
where $\Delta_x>0$ is called the mass of the node $x$. 
Intuitively $\Delta_x$ represents the size of the progeny of $x$. 

The scaling property of the stable Lévy tree implies that there exists a well
defined 
probability measure $\N^{(1)}$ defined as the measure $\N$ conditioned
on $\{\sigma=1\}$. The probability measure is also referred as the
normalized excursion measure for Lévy trees. 

\subsection{The partition-valued process}
Set $\psi(\lambda)=\lambda^\gamma$  with $\gamma\in(1,2)$.
\subsubsection{Pruning of the stable L\'evy tree}
We consider  the pruning procedure introduced  in \cite{ad:falpus} (this
procedure is defined when there is  no Brownian part in the Lévy process
with  index given  by the  branching  mechanism $\psi$).  Under $\N$  or
$\N^{(1)}$, conditionally  given $\ct$, we consider  a family $(E_x,x\in
\cb_\text{br}(\ct))$ of independent real  random variables such that the
random  variable  $E_x$  is  exponentially  distributed  with  parameter
$\Delta_x$.   This random  variable  represents the  time  at which  the
branching point $x$ is marked.  For every $\theta>0$, we set
\[
\ct_\theta=\{x\in\ct,\ \forall y\in\lb\emptyset ,
x\lb,\ E_y\ge\theta\}.
\]
The set $\ct_\theta$ is still a real tree which represents the tree $\ct$
  pruned at time $\theta$: we cut $\ct$ at  the points that are marked before
  time $\theta$ and keep the connected component of the tree that
  contains the root. We set $\ct_0=\ct$. 
By \cite{ad:falpus}, Theorem 1.5, the tree $\ct_\theta$ is distributed
under $\N$ as a L\'evy tree with branching mechanism $\psi_\theta$
defined by:
\[
\psi_\theta(\lambda)=\psi(\lambda+\theta)-\psi(\theta).
\]
Moreover, by  \cite{ad:ctvmp}, the process $(\ct_\theta,\theta\ge 0)$
is under $\N$ a Markov process.

\subsubsection{Definition of the partition-valued process}
\label{sec:partition-L}
Under $\N$ or $\N^{(1)}$, conditionally on $\ct$, let $(F_i, i\in \N^*)$
be independent  random variables on  $\ct$ distributed according  to the
probability  mass measure  $\bm/\bm(\ct)$, and  independent of  the marks
$(E_x,   x\in   \cb_\text{br}(\ct))$.    Notice   that   $\N$-a.e.    or
$\N^{(1)}$-a.s. $(F_i, i\in \N^*)$ are leaves of $\ct$.  For $\theta\geq
0$,  we define the  equivalence relation  $\calr_\theta^\text{Lévy}$
on $\N^*$ by:
$i\calr_\theta^\text{Lévy}  j$  if  $\lb  \emptyset,F_i\rb  \bigcap  \lb
\emptyset,F_j\rb \bigcap  \cl(\ct_\theta)$ is  non empty, that  is $F_i$
and $F_j$ have  a leaf of $\ct_\theta$ as common  ancestor. This is very
close    to    the    definition    of    the    equivalence    relation
$\calr_\theta^\text{[n]}$ defined in Section \ref{sec:GW-partition}.  We
denote by  $\Pi_\text{Lévy}(\theta)$ the  partition of $\N^*$  formed by
the   equivalence  classes  of   $\calr_\theta^\text{Lévy}  $   and  set
$\Pi_\text{Lévy}=(\Pi_\text{Lévy}(\theta), \theta\geq 0)$.

\subsection{L\'evy sub-trees}

\subsubsection{Skeleton of finite real tree}
Let $\hat \bt$ be a real tree  with finite height and a finite number of
leaves, such  that the  leaves $(f_i, i\in  I(\hat \bt))$ are indexed
by a totally ordered set $I(\hat \bt)$. We define the skeleton $\tilde
\bt$ of the tree $\hat\bt$ as the discrete tree (belonging to $\T$) where we
forget the edge lengths. As the trees in $\T$ are ordered, we must be
a bit more rigorous for the definition of $\tilde\bt$.

 The skeleton $\tilde \bt$ of the real tree
with ordered leaves $\left(\hat  \bt, (f_i, i\in I(\hat \bt))\right)$ is
defined recursively as follows.   We define $k_\emptyset(\tilde \bt)$ as
the degree of  $\text{MRCA}(f_i, i\in I(\hat \bt))$ the  ancestor of all
the leaves of $\hat  \bt$.  If $k_\emptyset(\tilde \bt)=0$, then $\tilde
\bt$ is  reduced to $\emptyset$. In  this case $\hat \bt$  has one leaf,
let $f$  be its label, and the  discrete tree $\tilde \bt$  has thus one
leaf to which we give  the label $f$. If $k_\emptyset(\tilde \bt)>0$, then
we consider  the $k_\emptyset(\tilde \bt)$ connected  components of $\hat
\bt\setminus\{  \text{MRCA}(f_i,  i\in  I(\hat  \bt))\} $  that  do  not
contain  the root  and label  them from  1 to  $k_\emptyset(\tilde \bt)$
according to the lowest label of  the leaves of $\hat \bt$ which belongs
to them.  This  gives an ordered family $(\hat  \bt_k, k\in \{1, \ldots,
k_\emptyset(\tilde \bt)  \})$ of  real trees, and  let $\text{MRCA}(f_i,
i\in I(\hat \bt))\})$  be the root of each one.   For $k\in \{1, \ldots,
k_\emptyset(\tilde  \bt)  \}$, let  $I(\hat  \bt_k)=\{i\in I(\hat  \bt);
f_i\in \hat  \bt_k)$ be the labels  of the leaves  of $\hat \bt_k$  and the
discrete tree $\tilde \bt_k$ is the skeleton of $\left(\hat \bt_k, (f_i,
  i\in I(\hat \bt_k))\right)$.

Notice  that $\tilde  \bt$ is  finite, $k_u(\tilde  \bt)\neq 1$  for all
$u\in \tilde \bt$, and $\hat \bt$  and $\tilde \bt$ have the same number
of leaves.  In  the previous construction to a leaf  $f_i$ of $\hat \bt$
with  label $i$ corresponds  a unique  leaf $e_i$  of $\tilde  \bt$ with
label $i$.  For $u\in \tilde \bt$, we define $\tilde\bt_u$ the
sub-tree of $\tilde\bt$ attached to the node $u$ i.e.
\begin{equation}\label{eq:def-tu}
\tilde\bt_u=\{w\in\cu,\ uw\in \tilde\bt\},
\end{equation}
and let $I_u=\{i; e_i\in \tilde \bt_u\}$.
Define $\hat \bt_u$ as $\bs_u=\hat \bt \backslash \bigcup_{i\not\in I_u}
\lb    \emptyset,    f_i\rb$    to     which    we    add    the    root
$\emptyset_u=\overline{\bs_u} \backslash\bs_u$, and $I(\hat \bt_u) =\{i;
e_i\in \tilde  \bt_u\}$.  Notice that by construction  $\tilde \bt_u$ is
the  skeleton   of  $\left(\hat  \bt_u,  (f_i;  i\in   I(\hat  \bt_u)  )
\right)$. We  say that  $u\in \tilde \bt$  are the individuals  of $\hat
\bt$, and define their lifetime as the length $h_u$ of the geodesic $B(u)=\lb
\emptyset_u,\text{MRCA}(f_i, i\in I(\hat \bt_u)) \rb$. We say the
corresponding node in $\hat \bt$ of $u\in \tilde \bt$ is
$C(u)=\text{MRCA}(f_i, i\in I(\hat \bt_u))$. 

Notice it is easy to reconstruct
$\hat \bt$ from $\tilde \bt $ and the family of lifetime $(h_u, u\in
\tilde \bt)$.

\subsubsection{Coalescence of Lévy tree and GW tree}
Let $M$ be,  under $\N$ or $\N^{(1)}$ conditionally  on $\ct$, a Poisson
random variable  with finite mean $\sigma=\bm(\ct)$. We shall work on $\{M\geq 1\}$.  On $\{M\geq 1\}$,
let $\hat  T_0$ be the real sub-tree  of $\ct$ generated by  the root and
$(F_i, 1\leq i\leq M)$:
\[
\hat T_0 = \bigcup_{1\le i\le M}\lb \emptyset, F_i\rb. 
\]
Since  $\bm$ has  support $\cl(\ct)$  and has  no atom,  we  deduce that
$(F_i, 1\leq i\leq  M)$ are distinct and are the leaves  of $\hat T_0$.  

We denote  by $\tilde T_0$ the  skeleton of $\hat T_0$  with the labeled
leaves $(F_i, 1\leq i\leq M)$.  According to \cite{dlg:rtlpsbp}, Theorem
3.2.1, the  tree $\hat T_0$ is distributed  under $\N[\; \cdot\bigm|M\ge
1]$ as a continuous GW tree (i.e. a GW tree with edge-lengths) such that
\begin{itemize}
\item The discrete tree $\tilde T_0$ is a GW  tree with
  offspring distribution characterized by its generating function $g$ defined
  by  \reff{eq:def-g} with $\alpha=1/\gamma$. 
\item Lifetimes of individuals  $(h_u, u\in \tilde T_0)$ are independent
  random   variables  with   exponential  distribution   with  parameter
  $\gamma$.
\end{itemize}

We must first prove the following lemma which will be a key point in
the sequel. Its proof relies on the scaling property of the Lévy tree.
\begin{lem}
The
distributions of $\hat T_0$ under $\N[\; \cdot\bigm| M=n]$ and
under $\N^{(1)}[\; \cdot \bigm| M=n]$ are the same.
\end{lem}

\begin{proof}
For a tree $\ct$ and points $x_1,\ldots,x_n$ of $\ct$, let us denote
by $T(\ct,x_1,\ldots,x_n)$ the tree spanned by the points $(x_i)$ and
the root of the tree and $\tilde T(\ct,x_1,\ldots,x_n)$ the associated
discrete tree so that under $\N[\; \cdot\bigm| M=n]$ or $\N^{(1)}[\;
  \cdot \bigm| M=n]$, we have
$$\tilde T_0=\tilde T(\ct,F_1,\ldots,F_n).$$

Then, for every bounded measurable function $\phi$, we have
$$\N\left[\phi\bigl(\tilde
  T(\ct,F_1,\ldots,F_n)\bigr)\ind_{\{M=n\}}\right]=\N\left[\phi\bigl(\tilde
  T(\ct,F_1,\ldots,F_n)\bigr)\frac{\sigma^n}{n!}\expp{-\sigma}\right].$$

Let $\nu$ be the distribution of $\sigma$ under $\N$ i.e. the only
measure $\nu$ such that for every $\lambda>0$,
$$\int_0^{+\infty}(1-\expp{-\lambda u})\nu(du)=\lambda^\alpha.$$

Then we have
$$\N\left[\phi\bigl(\tilde
  T(\ct,F_1,\ldots,F_n)\bigr)\ind_{\{M=n\}}\right]=\int_0^{+\infty}\N^{(u)}\left[\phi\bigl(\tilde
  T(\ct,F_1,\ldots,F_n)\bigr)\right]\frac{u^n}{n!}\expp{-u}\nu(du).$$
Using the scaling property of the stable L\'evy tree (see
\cite{dlg:rtlpsbp} Section 3.3), we have that the law of the tree
$\ct$ under $\N^{(u)}$ is the same as the law of $u^{1-\alpha}\ct$
under $\N^{(1)}$ where the notation $\lambda \ct$ means that we
  multiply the distance that defines $\ct$ by the factor $\lambda$
  (i.e. we scale all the edge lengths by $\lambda$). Moreover, as we
  only look at discrete trees, this factor does not modify the tree
  $\tilde T_0$. Therefore, we get:
\begin{align*}
\N\left[\phi\bigl(\tilde
  T(\ct,F_1,\ldots,F_n)\bigr)\ind_{\{M=n\}}\right] & =\int_0^{+\infty}\N^{(1)}\left[\phi\bigl(\tilde
  T(\ct,F_1,\ldots,F_n)\bigr)\right]\frac{u^n}{n!}\expp{-u}\nu(du)\\
& =\N^{(1)}\left[\phi\bigl(\tilde
  T(\ct,F_1,\ldots,F_n)\bigr)\right]\N[M=n].
\end{align*}
We deduce:
\begin{multline*}
\N[\phi(\tilde T_0)\bigm|M=n]=\N\left[\phi\bigl(\tilde
  T(\ct,F_1,\ldots,F_n)\bigr)\bigm|M=n\right]\\
=\N^{(1)}\left[\phi\bigl(\tilde
  T(\ct,F_1,\ldots,F_n)\bigr)\right]=\N^{(1)}[\phi(\tilde
  T_0)\bigm|M=n]
\end{multline*}
since $\ct$ and $M$ are independent under $\N^{(1)}$.
\end{proof}

We now consider the marks that define the pruned tree $\ct_\theta$ and
we define on the event $\{M\ge 1\}$  the tree $\hat T_\theta$ as the tree $\hat T_0$ pruned on the
same marks, in other words, we set
\[
\hat T_\theta=\hat T_0\cap\ct_\theta.
\]
Let    $\hat    \Pi_\text{Lévy}^{[n]}$    be    the   restriction    of
$\Pi_\text{Lévy}$  to  the  $n$   first  integers.  By  construction,  if
$C_\theta$ is  an element of $\hat   \Pi_\text{Lévy}^{[n]}(\theta)$, then there
exists  a leaf  $x$ of  $\hat  T_\theta$ such  that $x$  belongs to  the
sub-tree $\bigcup _{i\in C_\theta} \lb \emptyset, F_i\rb$, and $x$ is the
only leaf of  $\hat T_\theta$ with this property.  We set $C_\theta$ for
the label of  $x$, and we consider the order of  the elements of $\tilde
\Pi_\text{Lévy}^{[n]}$ given by the  order of their smallest integer. We
set $I_\theta=I(\hat T_\theta)  $ for the labels of  the leaves of $\hat
T_\theta$ and $(F^\theta_i, i\in I_\theta)$ for the leaves of $\hat
T_\theta$. 

We denote by $\tilde T_\theta$  the skeleton of $\hat T_\theta$
with the labeled leaves $(F_i^\theta, i\in I_\theta)$. 
According to \cite{adh:pcrtst}, Proposition 4.1, the tree
$\hat T_\theta$ is distributed under $\N[\; \cdot\bigm|M\ge 1]$ as a continuous
GW  tree such that
\begin{itemize}
\item $\tilde T_\theta$ is a GW tree with offspring distribution
  characterized by its 
  generating function $g_\theta$ given in 
  \reff{eq:def-gq} with $\alpha=1/\gamma$. 
\item The lifetimes of individuals $(h_u, u\in \hat T_\theta)$ are
  independent  random variable with exponential  distribution
  with parameter $\psi_\theta'(1)=\gamma(1+\theta)^{\gamma-1}$.
\end{itemize}

The following Lemma is a consequence of Theorem 6.1 of
\cite{adh:pcrtst}.

\begin{lem}
\label{lem:discrete_pruning}
The process $(\tilde T_\theta,\theta\ge 0)$ is distributed
under $\N[\; \cdot\bigm|M\ge 1]$ as the process $(\cp_\theta(T),\theta\ge
0)$ under $\bP$. 
\end{lem}

\begin{proof}
Let $\theta>0$. 
Theorem 6.1 of \cite{adh:pcrtst}  describes how
$\hat T_\theta$ is obtained from $\hat T_0$: 
\begin{itemize}
\item  A branching  point $x$  of  $\hat T_0$  with $k_x=k_x(\hat  T_0)$
  children is marked at time $\tau_x$ with distribution given by:
\[
\N[\tau_x\ge \theta\bigm|\hat T_0]
=-\int_\theta^{+\infty}\frac{\psi^{(k_x+1)}(1+z)}{\psi^{(k_x)}(1)}dz
 =\frac{\psi^{(k_x)}(1+\theta)}{\psi^{(k_x)}(1)}
 =\left(\frac{1}{1+\theta}\right)^{k_x-\gamma}.
\]
\item A branch $B$ of length $h$ is marked at time $\tau_B$ with
  distribution given by:
\[
\N[\tau_{B}\ge \theta\bigm|\hat T_0]
 =\exp\left(-h\int_0^\theta \psi''(1+z)dz\right)
 =\expp{-\bigl(\psi'(1+\theta)-\psi'(1)\bigr)h}.
\]
\end{itemize}
Then the tree  $\hat T_0$ is cut according to the  marks present at time
$\theta$ and  the tree $\hat  T_\theta$ is the connected  component that
contains the  root.  Therefore, the  tree $\tilde T_\theta$  is obtained
from the  tree $\tilde T_0$  by a pruning  at node. A node  $u\in \tilde
T_0$ is  marked if the corresponding  node $C(u)\in \hat T_0$  is marked at
time  $\theta$ in the  previous procedure  OR the  branch $B(u)$ 
with length $h_u$ is marked. So the node $u$ of $\tilde T_0$ is marked
at time $\zeta_u=\tau_{C(u)}\wedge \tau_{B(u)}$  and using that the edge lengths
of  $\hat  T_0$  are  independent  and  exponentially  distributed  with
parameter $\gamma=\psi'(1)$, we have with $k_u=k_u(\hat T_0)$:
\begin{align*}
\N[\zeta_u\ge \theta\bigm|\tilde T_0] 
& = \N[\tau_{C(u)}\ge \theta\bigm| \tilde T_0]\; \N[\tau_{B(u)}\ge
\theta\bigm| \tilde T_0] \\
&= \left( \inv{1+\theta}\right)^{k_u-\gamma}
\int_0^{+\infty}dh\,
\gamma\expp{-\gamma h}  \expp{-\bigl(\psi'(1+\theta)-\gamma \bigr)h}\\
& =\left( \inv{1+\theta}\right)^{k_u-\gamma}
\left(\inv{1+\theta}\right)^{\gamma-1}\\ 
& =\left(\frac{1}{1+\theta}\right)^{k_u-1}\cdot
\end{align*}
Since the cutting time $\tau_{C(u)}$ and $\tau_{B(u)}$ are independent for all
internal  nodes $u$,  we  recover the  discrete  pruning procedure  that
defines the process $(\cp_\theta(T),\theta\ge 0)$ under $\bP$. To
conclude notice that $\tilde T_0$ and $T$ are GW tree with offspring
distribution characterized by its generating function $g$. 
\end{proof}

\subsection{Proof of Theorem \ref{th:main2}}

The  next  corollary  states  that  the  pruning  procedure  for  stable
GW   tree developed  in \cite{adh:pgwttvmp}  and  the pruning
procedure for  Lévy trees developed in \cite{ad:falpus}  and applied in
\cite{adh:pcrtst} to sub-trees with finite number of leaves coincide.

\begin{cor}
\label{cor:pruning}
Let $n\in\N$. The process $(\tilde T_\theta,\theta\ge 0)$ is distributed
under $\N[\; \cdot\bigm|M=n]$ as the process $(\cp_\theta(T),\theta\ge
0)$ under $\bP_n$. 
\end{cor}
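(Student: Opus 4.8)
The plan is to deduce Corollary \ref{cor:pruning} from Proposition \ref{prop:discrete_pruning} by conditioning on the number of leaves $M=n$. The key observation is that Proposition \ref{prop:discrete_pruning} already identifies the process $(\tilde T_\theta,\theta\ge 0)$ under $\N[\;\cdot\bigm|M\ge 1]$ with $(\cp_\theta(T),\theta\ge 0)$ under $\bP$, and that the number of leaves of $\tilde T_\theta$ is non-increasing in $\theta$ and never created by the pruning: a cut only removes a sub-tree above a branching point and turns that point into a leaf, so $L(\tilde T_\theta)$ equals $L(\tilde T_0)=M$ for all $\theta$ on the GW side, and likewise $L(\cp_\theta(T))=L(T)$ for all $\theta$ under $\bP$. (This last fact is immediate from the definition of $\cp_\theta$ in Section \ref{sec:d-tree-valued}, since each cut removes everything strictly above a node of degree $\ge 2$ and keeps that node as a leaf, leaving the total leaf count unchanged.)

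First I would record that, under $\bP$, the event $\{L(T)=n\}$ coincides with $\{L(\cp_\theta(T))=n\text{ for all }\theta\ge 0\}$, and in particular is measurable with respect to any single marginal; the same holds under $\N[\;\cdot\bigm|M\ge 1]$ with $\{M=n\}=\{L(\tilde T_\theta)=n\text{ for all }\theta\}$. Second, since $(\tilde T_\theta,\theta\ge 0)$ under $\N[\;\cdot\bigm|M\ge 1]$ and $(\cp_\theta(T),\theta\ge 0)$ under $\bP$ have the same law as processes (Proposition \ref{prop:discrete_pruning}), conditioning both on the common event that the (constant-in-$\theta$) number of leaves equals $n$ preserves the equality in law: for any bounded measurable functional $\Phi$ of the path,
\[
\N\!\left[\Phi\bigl((\tilde T_\theta)_{\theta\ge 0}\bigr)\bigm| M=n\right]
=\frac{\N\!\left[\Phi\bigl((\tilde T_\theta)_{\theta\ge 0}\bigr)\ind_{\{M=n\}}\bigm| M\ge 1\right]}{\N\!\left[\ind_{\{M=n\}}\bigm| M\ge 1\right]}
=\frac{\bP\!\left[\Phi\bigl((\cp_\theta(T))_{\theta\ge 0}\bigr)\ind_{\{L(T)=n\}}\right]}{\bP\!\left[\ind_{\{L(T)=n\}}\right]}
=\bE_n\!\left[\Phi\bigl((\cp_\theta(T))_{\theta\ge 0}\bigr)\right],
\]
where the middle equality uses that $\{M=n\}$ and $\{L(T)=n\}$ are the same path-functional event (namely $L(\cdot)=n$, constant in $\theta$) under the identification of Proposition \ref{prop:discrete_pruning}, and the last equality is the definition of $\bP_n$.

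The only point requiring a little care is that $\N[\;\cdot\bigm|M\ge 1]$ is a genuine probability measure (the conditioning on $M\ge 1$ makes sense because $\N[M\ge 1]\in(0,\infty)$, as $\sigma=\bm(\ct)$ is $\N$-a.e. finite and strictly positive off a $\N$-null set), so that conditioning further on $\{M=n\}$ is legitimate and $\N[M=n\mid M\ge 1]>0$ for every $n\ge 1$. I expect no real obstacle here; the substance is entirely contained in Proposition \ref{prop:discrete_pruning}, and the corollary is the routine consequence of pushing that equality in law through a conditioning on a functional that is invariant along the pruning flow. Once Corollary \ref{cor:pruning} is in hand, together with the definition of the partition-valued processes in Sections \ref{sec:GW-partition} and \ref{sec:partition-L} one reads off that $(\hat\Pi^{[n]}_\text{L\'evy})$ under $\N^{(1)}[\;\cdot\bigm|M=n]$ agrees with $(\hat\Pi^{[n]}_\text{GW})$ under $\bP_n$, and then Theorem \ref{th:main} yields Theorem \ref{th:main2}.
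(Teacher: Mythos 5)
Your argument is the same as the paper's: the corollary is obtained from Proposition \ref{prop:discrete_pruning} together with the identity $M=L(\tilde T_0)$, which lets you rewrite the conditioning on $\{M=n\}$ as a conditioning on $\{L(T)=n\}$, i.e.\ as passing from $\bP$ to $\bP_n$; your displayed computation just makes that one-line deduction explicit. One side-claim you make is false, however: the pruning does \emph{not} leave the leaf count unchanged. When a node $u$ with $k_u(\bt)\ge 2$ is cut, the at least two leaves above $u$ are replaced by the single leaf $u$, so $L(\cp_\theta(T))$ strictly decreases at each cut --- this is precisely what makes the leaf-labels coalesce. Consequently $\{M=n\}$ is \emph{not} the event $\{L(\tilde T_\theta)=n\text{ for all }\theta\}$. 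Fortunately your proof only needs the event $\{M=n\}=\{L(\tilde T_0)=n\}$ to be a measurable functional of the time-$0$ marginal (and $\{L(T)=n\}=\{L(\cp_0(T))=n\}$ on the other side), so the conditioning argument goes through unchanged once that incorrect justification is dropped.
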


\begin{proof}
   This is a direct consequence of Lemma
   \ref{lem:discrete_pruning} and the fact that $M=L(\tilde T_0)$. 
\end{proof}

Theorem \ref{th:main2}  follows directly from  Theorem \ref{th:main} and
from the following  corollary, which is a direct  consequence of Corollary
\ref{cor:pruning}.   Recall  that  $\hat \Pi_\text{Lévy}^{[n]}$  is  the
restriction     of      $\Pi_\text{Lévy}$     defined     in     Section
\ref{sec:partition-L} to the $n$ first integers.
\begin{cor}
   \label{cor:GW-CRT}
The process $\hat \Pi_\text{Lévy}^{[n]}$  is under
$\N^{(1)}$ distributed as $\hat
\Pi^{[n]}_\text{GW}$ under $ \bP_n$. 
\end{cor}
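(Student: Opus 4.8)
The plan is to write each of the two partition-valued processes as the image, under a single deterministic map, of a \emph{labelled} tree-valued pruning process, and then to identify the laws of these two tree-valued processes by means of Corollary~\ref{cor:pruning}. The delicate point will be the reduction from the Poissonised family of leaves used to define $\Pi_\text{L\'evy}$ to the span of the first $n$ leaves, so that Corollary~\ref{cor:pruning} becomes applicable.

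First I would record that, for $i,j\le n$, the relation $i\,\calr_\theta^\text{L\'evy}\,j$ refers only to $\lb\emptyset,F_i\rb$, $\lb\emptyset,F_j\rb$ and $\cl(\ct_\theta)$, so that $\hat\Pi^{[n]}_\text{L\'evy}$ is a measurable functional of $F_1,\dots,F_n$ and $(\ct_\theta,\theta\ge 0)$ alone. Setting $\hat T^{(n)}_0=\bigcup_{1\le i\le n}\lb\emptyset,F_i\rb$, $\hat T^{(n)}_\theta=\hat T^{(n)}_0\cap\ct_\theta$, and letting $\tilde T^{(n)}_\theta$ be the skeleton of $\hat T^{(n)}_\theta$ with leaves labelled as in Section~\ref{sec:partition-L}, the paragraph preceding Proposition~\ref{prop:discrete_pruning} shows that the blocks of $\hat\Pi^{[n]}_\text{L\'evy}(\theta)$ are exactly the leaf-labels of $\tilde T^{(n)}_\theta$. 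On the GW side, from the definition of $\calr_\theta^{[n]}$ in Section~\ref{sec:GW-partition} one sees that each leaf $w$ of $\cp_\theta(T)$ carries the block $\{i:\ w\in A_{U_i}\}$, that these blocks form $\hat\Pi^{[n]}_\text{GW}(\theta)$, and that the resulting labelling of the leaves of $\cp_\theta(T)$ is the one obtained from the uniform labelling $U_1,\dots,U_n$ of the leaves of $T=\cp_0(T)$ by the pruning dynamics. Hence, with $\Phi$ the deterministic map which associates to a discrete tree whose leaves are labelled by the blocks of a partition of $\{1,\dots,n\}$ that partition, I would have $\hat\Pi^{[n]}_\text{L\'evy}=\Phi((\tilde T^{(n)}_\theta,\theta\ge 0))$ and $\hat\Pi^{[n]}_\text{GW}=\Phi((\cp_\theta(T),\theta\ge 0))$, the arguments being labelled tree-valued processes.

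Next I would argue that the two labelled processes have the same law. On both sides, conditionally on the underlying \emph{unlabelled} tree-valued process the labelling is a uniformly random bijection from $\{1,\dots,n\}$ onto the $n$ initial leaves, independent of the pruning dynamics: under $\bP_n$ by construction, and under $\N^{(1)}$ because $(E_x)$ is independent of $(F_i)$ given $\ct$ and $F_1,\dots,F_n$ are exchangeable. So it suffices to match the unlabelled processes. Here I would use that $\sigma=\bm(\ct)=1$ $\N^{(1)}$-a.s., so that $M$ is independent of $(\ct,(F_i),(E_x))$ and conditioning $\N^{(1)}$ on $\{M=n\}$ changes nothing while turning $\hat T^{(n)}_0$ into $\hat T_0$; thus $(\tilde T^{(n)}_\theta,\theta\ge 0)$ under $\N^{(1)}$ has the law of $(\tilde T_\theta,\theta\ge 0)$ under $\N^{(1)}[\,\cdot\mid M=n]$, which by the remark following the definition of $\hat T_0$ equals its law under $\N[\,\cdot\mid M=n]$, which by Corollary~\ref{cor:pruning} is the law of $(\cp_\theta(T),\theta\ge 0)$ under $\bP_n$. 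Applying $\Phi$ then yields the corollary.

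I expect the main obstacle to be the first step: checking carefully that the block-labelled leaves of the pruned tree genuinely encode the partition on both sides (in particular that on the L\'evy side nothing is lost by passing from the full family $(F_i,i\in\N^*)$ to the span $\hat T^{(n)}_0$ of the first $n$ leaves), and that this span, taken \emph{unconditioned} under $\N^{(1)}$, carries exactly the law produced by the $\{M=n\}$-conditioning. Everything after that is the routine transport of a deterministic functional across an equality in distribution.
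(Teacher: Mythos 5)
Your argument is correct and follows essentially the same route as the paper, which simply declares the corollary to be a direct consequence of Corollary~\ref{cor:pruning}. You supply the details the paper leaves implicit — the reduction of both partitions to deterministic functionals of labelled pruned trees, the exchangeability argument for the labelling, and the de-Poissonisation step identifying the span of the first $n$ leaves under $\N^{(1)}$ with $\hat T_0$ under the $\{M=n\}$-conditioning via the scaling remark — and these are exactly the steps the paper's one-line proof is relying on.
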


Using Lemma  \ref{lem:discrete_pruning}, we also have the
following corollary which shows that the first coalescent event in
$\hat\Pi^{[n]}_\text{L\'evy}$ is not exponentially distributed.

\begin{cor}
\label{cor:first_event}
Let $\tau_1^{(n)}$ be the first coalescent event in
$\hat\Pi^{[n]}_\text{L\'evy}$. Then we have for $\theta\geq 0$:
$$\N^{(1)}[\tau_1^{(n)}\ge \theta]=\left(\frac{1}{1+\theta}\right)^{n-1}.$$
\end{cor}

\begin{proof}
We keep the notations of the proof of Lemma \ref{lem:discrete_pruning}. We have:
\begin{align*}
\N^{(1)}[\tau_1^{(n)}\ge \theta] & = \N\left[\N\left[\inf_{u\in \cn(\tilde
      T_0)}\zeta_u\ge \theta \bigm| \tilde T_0\right] \Bigm| M=n\right]\\
& = \N\left[ \prod_{u\in\cn(\tilde
  T_0)}\left(\frac{1}{1+\theta}\right)^{k_u(\tilde T_0)-1}\Bigm|
M=n\right]\\
& =\N\left[\left(\frac{1}{1+\theta}\right)^{M-1}\Bigm|
M=n\right]\\
& =\left(\frac{1}{1+\theta}\right)^{n-1},
\end{align*}
using \reff{eq:leaves-nodes} for the third equality.
\end{proof}

\section{Proof of Proposition \ref{cor:momentZ}}
\label{sec:proof-cut}

We recall results from \cite{hm:sslnmc}, Corollary 1. Let $X_n$ be
the number of coalescence events for a $\beta(a,b)$-coalescent. For
$1<a<2$ and $b>0$, we have that:
\[
\frac{2-a}{\Gamma(a)} n^{a-2} X_n
\]
converges in distribution towards
\[
W_{a,b}=\int_0^\infty  dt\; \expp{-(2-a) S_{a,b}(t)},
\]
where $S_{a,b}$ is a subordinator with Laplace exponent $\phi_{a,b}$
 given by:
\[
\phi_{a,b}(\lambda)=\int_0^1 \left(1- (1-x)^\lambda \right)
x^{a-3}(1-x)^{b-1} \; dx.
\]
Notice that this notation  is consistent with \reff{eq:def-phi}. 
Since  $Z_n$ is distributed as $X_n$ 
with $a=1+\alpha$ and $b=1-\alpha$. We deduce that:
\[
n^{\alpha-1} Z_n\xrightarrow[n\rightarrow+\infty ] {(d) }Z,
\]
with $Z$ distributed as $\frac{\Gamma(1+\alpha)}{1-\alpha} W_{1+\alpha,1-\alpha}$.

Using  Lemma \ref{lem:f(l)}, 
we compute the moments of $Z$:
\begin{align*}
   \E\left[W_{1+\alpha, 1-\alpha}^n\right]
&=n! \int_{0\leq t_1\leq \cdots\leq t_n}\E\left[ \expp{- (1-\alpha)\sum_{k=1}^n
  S_{1-\alpha, 1+\alpha}(t_k)} \right]\; dt_1 \cdots dt_n\\
&=n! \int_{0\leq r_1, \cdots, 0 \leq r_n}\prod_{k=1}^n \E\left[ \expp{-
    (1-\alpha) 
  kS_{1-\alpha, 1+\alpha}(r_k)}\right] \; dr_1 \cdots dr_n\\
&=\frac{n!}{\prod_{k=1}^n \phi_{1+\alpha,1-\alpha}(k(1-\alpha))}\\
&=\left(\frac{1-\alpha}{\Gamma(\alpha)}\right)^n \frac{\Gamma(n+1)
  \Gamma(1-\alpha)}{\Gamma((n+1)(1-\alpha))}\cdot 
\end{align*}
We deduce that:
\[
\E\left[Z^n\right]=\alpha^n
\, \frac{\Gamma(n+1) \Gamma(1-\alpha)}{\Gamma((n+1)(1-\alpha))}\cdot
\]

\section{Number of blocks in the last coalescence event}
\label{sec:proof-bn}

We consider  the  number of  blocks $B_n$  involved in  the  last coalescence
event of $\Pi^{[n]}_\text{dis}$. 
In order to stress the dependence in $n$,
we shall denote by $T_n$ the GW  tree $T$
under $\bP_n$. We also write $\xi_u(T_n)$ for $\xi_u$ to stress the
dependence of the marks introduced in Section \ref{sec:d-tree-valued} as
a function of the 
underlying tree $T_n$. 
Notice that the time $\xi_\emptyset(T_n)$ at which the
root of $T_n$ is marked corresponds to the last coalescence event
associated with $T_n$. 
Thanks to   Theorem \ref{th:main}, $B_n$
is distributed as the number of leaves of the pruned tree obtained from $T_n$
just before the last coalescence event, that is:
\begin{equation}
   \label{eq:def-bn}
B_n \overset{(\text{d})}{=} L(\cp_{\xi_\emptyset(T_n)-}(T_n)).
\end{equation}

\subsection{Local limit}
The method used in
\cite{ad:cbcpbt} when $\alpha=1/2$ relies on the Aldous's CRT, which is
the (global) limit of $T_n$ when the length of the branch of $T_n$ are
rescaled by $1/\sqrt{n}$, see \cite{d:ltcpcgwt}. Since Lévy's trees are
more difficult to handle, we choose here to use the local limit of
$T_n$, which is the Kesten's tree $T^*$, according  to  \cite{ck:rncpccgwta} Theorem 3.1 or \cite{ad:llcgwtisc} Proposition 4.6.

Recall that $\nu_g$ is the distribution with generating function $g$ given in
\reff{eq:def-g} and  that $\nu_g$ is  critical as $g'(1)=1$.   We recall
the distribution of  the Kesten's tree $T^*$ associated with the critical
reproduction  law $\nu_g$,  see  \cite{k:sbrwrc}. Let  $\nu_g^*$ be  the
corresponding  size-biased distribution: $\nu_g^*(k)=k\nu_g(k)$  for all
$k\in \N$.  For $h\in \N$,  we consider the truncation operator $r_h$ on
$\T$ defined as:
\[
r_h\bt=\{u\in \bt; |u|\leq h\}.
\]
The distribution of $T^*$ is as follows.
Almost surely, $T^*$ contains a unique infinite path
  i.e. a unique infinite sequence  $(V_k, k\in \N^*)$ of positive
  integers such that, for every $h\in \N$, $V_1\cdots V_h\in T ^*$, with
  the convention that $V_1\cdots V_h=\emptyset$ if $h=0$.
The joint distribution of $(V_k, k\in \N^*)$ and $T^*$ is
  determined recursively as follows: for each $h\in \N$,
  conditionally given $(V_1,\ldots,V_h)$ and $r_hT^*$, we have:
\begin{itemize}
\item The number of children $(k_v(T^*),\ v\in T^*,\ |v|=h)$ are
  independent and  distributed according to $\nu_g$ if $v\ne
  V_1\cdots V_h$ and according to $\nu_g^*$ if $v=V_1\cdots V_h$.
\item  Given  also  the  numbers  of children  $(k_v(T^*),\  v\in  T^*,\
  |v|=h)$, the vertex  $V_{h+1}$ is uniformly distributed on  the set of
  integers $\left\{1,\ldots, \sum_{v\in T^*,\ |v|=h}k_v(T^*)\right\}$.
\end{itemize}
We denote by $\P$ the distribution  of $T^*$. 

Recall   that  the   height  of   a   discrete  tree   $\bt\in  \T$   is
$H_\text{max}(\bt)=\sup\{|u|, u\in  \bt\}$. The local  limit convergence
of critical  GW trees,  see \cite{ad:llcgwtisc},  implies that,  for all
$h\in \N^*$, $\bt\in \T$ with height $h$:
\[
\lim_{n\rightarrow+\infty } \bP_n(r_hT_n=\bt)=\P(r_h T^*=\bt).
\]

Notice that  $\cp_\theta(T^*)$ is  a.s. finite  for any  $\theta>0$.  By
construction  of the  marks,  we  easily get  that  the  local limit  of
$(\cp_\theta(T_n),  \theta\geq   0)$  is  given   by  $(\cp_\theta(T^*),
\theta\geq  0)$.    Since  $k_\emptyset(T_n)$   converges  in   distribution  to
$k_\emptyset(T^*)$   (with  distribution   $\nu^*_g$),  we   deduce  the
convergence  in   distribution  of  the  mark   $\xi_\emptyset(T_n)$  to
$\xi^*_\emptyset$ distributed under $\P$ as:
\[
\P(\xi^*_\emptyset\geq \theta |T^*)=(1+\theta)^{1- k_\emptyset(T^*)}.
\]
We deduce that the local limit in distribution of 
$\cp_{\xi_\emptyset(T_n)-}(T_n)$ is given by
$\cp_{\xi_\emptyset^*-}(T^*)$. 

This and the definition of $T^*$ gives the following Lemma.  For $\bt\in
\T$,  and $u\in  \bt$,  recall  the notation  $\bt_u$  for the  sub-tree
attached at $u$, see \reff{eq:def-tu}.

\begin{lem}
   \label{lem:bar-T}
We have, for all $\bt\in \T$:
\[
\lim_{n\rightarrow+\infty } \bP_n ( \cp_{\xi_\emptyset(T_n)-}(T_n)=\bt)
= \P(\bar T=\bt),
\]
where $\bar T$ is  such that:
\begin{itemize}
   \item $k_\emptyset(\bar T)$ has distribution $\nu_g^*$.
   \item Conditionally on $k_\emptyset(\bar T)$, $\xi$ is a random variable such that
$\P(\xi\ge \theta)=(1+\theta)^{1-k_\emptyset(\bar T)}$ for all $\theta\geq 0$.
   \item    Conditionally on $k_\emptyset(\bar T)$ and $\xi$, $V_1$ is a
     uniform random variable on $\{1, \ldots, k_\emptyset(\bar T)\}$.
   \item Conditionally on $k_\emptyset(\bar T)$,  $\xi$ and $V_1$, 
      $(\bar T_u, u\in \{1, \ldots, k_\emptyset(\bar T)\})$ are 
      independent random trees distributed such that for $u\neq V_1$,
      $T_u$ is  distributed as $\cp_\xi(T)$ with $T$ a GW 
      tree with offspring distribution $\nu_g$, and $T_{V_1}$ is
      distributed as $\cp_\xi(T^*)$,
     with $T^*$ distributed as the Kesten's tree associated with the
     reproduction   law $\nu_g$. 
\end{itemize}
\end{lem}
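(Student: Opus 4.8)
The plan is to combine the explicit description of $\bP_n(T_n=\bt)$ together with the local-limit convergence of critical Galton--Watson trees and the explicit law of the marks $(\xi_u)$ to identify the limit of the distribution of the tree pruned just before the root is cut. The starting point is the identity \reff{eq:def-bn}: just before the last coalescence event the pruned tree is $\cp_{\xi_\emptyset(T_n)-}(T_n)$, and the event $\{\xi_\emptyset(T_n)$ is the last mark$\}$ forces all the marks $\xi_u(T_n)$ for $u\in \cn(T_n)\setminus\{\emptyset\}$ to exceed $\xi_\emptyset(T_n)$. Hence, conditionally on $T_n$, $\xi_\emptyset(T_n)$, and on this event, the pruned tree $\cp_{\xi_\emptyset(T_n)-}(T_n)$ consists of the root together with, for each child $i\in\{1,\dots,k_\emptyset(T_n)\}$, the sub-tree $(T_n)_i$ pruned at time $\xi_\emptyset(T_n)$; that is, $\cp_{\xi_\emptyset(T_n)-}(T_n)$ has the same law (given the above) as a root with children the $\cp_{\xi_\emptyset(T_n)}((T_n)_i)$.

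Next I would fix $\bt\in\T$ of height $h$, and write $\bP_n(\cp_{\xi_\emptyset(T_n)-}(T_n)=\bt)$ by first conditioning on the truncation $r_{h}T_n$ and the value $\xi_\emptyset(T_n)=\theta$, then integrating over $\theta$ against the density of $\xi_\emptyset$ (which, given $k_\emptyset(T_n)=k$, has law $\P(\xi_\emptyset\ge\theta)=(1+\theta)^{1-k}$ from Section~\ref{sec:d-tree-valued}), then summing over the finitely many shapes of the relevant part of $T_n$. The key combinatorial point — the same one behind Lemma~\ref{lem:Ax} — is that the probability that $\emptyset$ carries the global minimum of the marks, multiplied by the law of $T_n$ near the root, produces exactly a size-biasing of $k_\emptyset$: weighting $\bP_n(T_n=\bt)$ by the indicator that $\emptyset$ is marked first contributes a factor proportional to $k_\emptyset(\bt)-1$, and combined with the Markov/branching structure of $\cp_\theta$ (Propositions~2.1 and 3.2 of \cite{adh:pcrtst}) this is precisely what makes the root of the limiting object have the size-biased law $\nu_g^*$ rather than $\nu_g$, while designating a uniformly chosen ``spine child'' $V_1$. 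Along the spine child one sees a copy of $T^*$ (again pruned at the common time $\xi$), and off the spine one sees ordinary GW trees pruned at time $\xi$; this is exactly the description of $\bar T$ in the statement.

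To make this rigorous and pass to the limit, I would use the explicit local-limit statements already recorded in the excerpt: for $\bt$ of height $h$ and $u$ with $|u|=h$,
\[
\P(r_hT^*=\bt,\,V_1\cdots V_h=u)=\bP_n(r_hT_n=\bt),
\]
together with $\lim_{n}\bP_n(r_hT_n=\bt)=\P(r_hT^*=\bt)$ from \cite{ad:cgwt}. The point is that the pruning $\cp_\theta$ and the condition ``$\emptyset$ marked first'' both depend only on the first generation of $T_n$ (for the mark at $\emptyset$) and on the sub-trees hanging from the children (for the subsequent pruning, which can be handled by the already-established consistency of the pruning procedure under $\bP$, i.e. Proposition~\ref{prop:discrete_pruning}). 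Since $\bt$ is finite, all sums are finite and one may exchange limit, finite sum, and the integral over $\theta$ by dominated convergence, using that $\bP_n(r_hT_n=\bt')\to\P(r_hT^*=\bt')$ for each of the finitely many $\bt'$ involved and that the $\theta$-integrand is bounded uniformly by an integrable function. Identifying the resulting limit term by term with the four-bullet description of $\bar T$ completes the proof.

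The main obstacle I anticipate is bookkeeping rather than conceptual: one must carefully verify that, on the event that $\xi_\emptyset(T_n)$ is the last mark, the pruned tree $\cp_{\xi_\emptyset(T_n)-}(T_n)$ really does factor as a root over \emph{independent} pruned sub-trees with the \emph{same} pruning time $\xi=\xi_\emptyset(T_n)$ shared across children, and that conditioning on ``$\emptyset$ marked first'' does not secretly bias the shapes of those sub-trees beyond selecting the common time $\xi$ and the spine child $V_1$; this is where the independence of the $(\xi_u)$ and the Markov property of $(\cp_\theta(T),\theta\ge0)$ under $\bP$ (hence, by conditioning, a controlled dependence under $\bP_n$) must be invoked precisely. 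Once that factorization is in place, matching it against the local limit $T^*$ (whose spine child has size-biased offspring $\nu_g^*$ and off-spine children are ordinary $\nu_g$-GW trees) yields the stated law of $\bar T$.
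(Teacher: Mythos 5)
Your overall architecture (decompose $\cp_{\xi_\emptyset(T_n)-}(T_n)$ at the root, invoke the local limit $T_n\to T^*$, and pass to the limit by dominated convergence over the finitely many configurations and the $\theta$-integral) is reasonable, and it is in fact more detailed than the paper, which dispatches the lemma with the single sentence ``By construction of the marks, we deduce the following result.'' However, your first paragraph contains a genuine error that then propagates into your size-biasing step. The cutting of the root is \emph{automatically} the last coalescence event---once $\emptyset$ is marked the pruned tree reduces to $\{\emptyset\}$ and the partition becomes trivial---so there is no conditioning event ``$\xi_\emptyset(T_n)$ is the last mark'', and in particular it is false that the other marks must exceed $\xi_\emptyset(T_n)$. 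On the contrary, the marks $\xi_u(T_n)<\xi_\emptyset(T_n)$ are exactly what produces the earlier coalescence events and what prunes the children's subtrees: if you really conditioned on $\xi_\emptyset$ being the global minimum of the marks, then $\cp_{\xi_\emptyset(T_n)-}(T_n)$ would equal all of $T_n$, and the limiting subtrees would be unpruned (the spine subtree infinite), contradicting the very statement you are proving. The correct decomposition is unconditional: a.s.\ (no ties among the marks) $\cp_{\xi_\emptyset-}(T_n)$ is the root together with $\cp_{\xi_\emptyset}((T_n)_i)$ for each child $i$, where $\xi_\emptyset$ depends on $T_n$ only through $k_\emptyset(T_n)$ and is independent of the marks in the subtrees given the tree.

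Consequently your proposed mechanism for the size-biased root law is also wrong: you attribute it to a weight proportional to $k_\emptyset(\bt)-1$ coming from ``$\emptyset$ marked first'' (Lemma \reff{lem:Ax}), but no such weighting occurs here, and even if it did it would produce a $(k-1)$-biasing rather than the $k$-biasing $\nu_g^*(k)=k\nu_g(k)$ asserted in the statement. The size-biased root, the uniform spine child $V_1$, and the Kesten subtree in the direction $V_1$ all come purely from the local limit $T_n\to T^*$: Kesten's tree already has root offspring law $\nu_g^*$ and a uniformly chosen spine child. What actually requires care---and what neither you nor the paper spells out---is that $\{\cp_{\xi_\emptyset-}(T)=\bt\}$ is not a function of $r_hT$ for any fixed $h$, so local convergence does not apply verbatim; one must use that $\cp_\xi(T^*)$ is a.s.\ finite (the spine is cut at an a.s.\ finite height) to truncate at a large $h$ with uniformly small error before invoking $\bP_n(r_hT_n=\cdot)\to\P(r_hT^*=\cdot)$.
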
 
Notice that by construction, $\bar T$ is finite.

\subsection{Proof of Proposition \ref{prop:cv-bn}}

   We deduce from \reff{eq:def-bn}, Lemma \ref{lem:bar-T} and the
   fact that $\bar T$ is a.s. finite, that $B_n$ converge in
   distribution to $B=L(\bar 
   T)$. From Lemma \ref{lem:bar-T}, we have that $B$ is
   distributed as
\[
L(\cp_\xi(T^*))+ \sum_{k=1}^{k_\emptyset -1} L(\cp_\xi(T_k)),
\]
where  $k_\emptyset$  has  distribution  $\nu^*_g$,  $\xi$  has  density
$(k_\emptyset   -1)  (1+\theta)^{-k_\emptyset}\ind_{\{\theta\geq  0\}}$,
$T^*$ is independent and distributed  as the Kesten's tree associated with
$\nu_g$, and $(T_k, k\in \N^*)$ are independent and distributed as a
Galton-Watson tree $T$ with offspring distribution $\nu_g$.  We deduce that:
\[
\E\left[r^B\right]=\E\left[N(N-1) \int_0^{+\infty } (1+\theta)^{-N}
  d\theta \,  \E\left[r^{L_\theta}\right]^{N-1}
  \E\left[r^{L^*_\theta}\right]\right], 
\]
where $N$ has distribution $\nu_g$, $L_\theta$ is the number of leaves of
$\cp_\theta(T)$ and $L_\theta^*$ is the number of leaves of
$\cp_\theta(T^*)$. 

Let $h_\theta$ be the generating function of $L_\theta$ and $h_\theta^*$
be the generating function of $L_\theta^*$. We have:
\[
\E\left[r^B\right]=
\int_0^{+\infty } \frac{d\theta}{(1+\theta)^2} \;
g''\left(\frac{h_\theta(r)}{1+\theta} \right) h_\theta(r) h^*_\theta(r).
\]

Recall that $\cp_\theta(T)$ is a GW  tree whose reproduction
law has generating function $g_\theta$ given by
\reff{eq:def-gq}. Similar arguments as in the proof of
\reff{eq:gen-leaf}, yields that:
\begin{equation}
   \label{eq:gh-q}
g_\theta(h_\theta(r))-h_\theta(r)=g_\theta(0)(1-r).
\end{equation}
We deduce from \reff{eq:def-gq} that:
\[
g''_\theta(r)=g''\left(\frac{r}{1+\theta}\right) \inv{1+\theta}\cdot
\]
We deduce from \reff{eq:gh-q} that:
\begin{equation}
   \label{eq:gq'}
(1-g'_\theta(h_\theta(r))) = \frac{g_\theta(0)}{h'_\theta(r)}
\quad\text{and}\quad g''_\theta(h_\theta(r)) = (1-g'_\theta(h_\theta(r)))
\frac{h''_\theta(r)}{(h'_\theta(r))^2}\cdot
\end{equation}
We obtain:
\[
g''\left(\frac{h_\theta(r)}{1+\theta}\right) \inv{1+\theta}= g_\theta(0)
\frac{h''_\theta(r)}{(h'_\theta(r))^3} \cdot
\]

We now compute $h_\theta^*$. According to Remark 3.7 in
\cite{adh:pcrtst}, we have for $\bt\in \T$:
 \[
\bP( \cp_\theta(T^*)=\bt)= L(\bt)\, \frac{1-g'_\theta(1)}{g'_\theta(0)}
\, 
\bP( \cp_\theta(T)=\bt).
\]
We deduce that:
\begin{align*}
   h^*_\theta(r)
= \bE\left[r^{L^*_\theta}\right]
&= \sum_{\bt\in \T} r^{L(\bt)} \bP( \cp_\theta(T^*)=\bt)\\
&= \frac{1-g'_\theta(1)}{g_\theta(0)} \sum_{\bt\in \T} L(\bt)
r^{L(\bt)} \bP( \cp_\theta(T)=\bt)\\
&= r \, \frac{h'_\theta(r)}{h'_\theta(1)} ,
\end{align*}
where we used the first equality in \reff{eq:gq'} with $r=1$ and
$h_\theta(1)=1$. 
We get:
\begin{equation}
   \label{eq:rBq}
\E\left[r^B\right]=
r \int_0^{+\infty } \frac{d\theta}{1+\theta}
\;\frac{g_\theta(0)}{h'_\theta(1)} \; 
\frac{h''_\theta(r)}{(h'_\theta(r))^2} \; h_\theta(r) . 
\end{equation}

We have from \reff{eq:def-gq} that:
\[
g_\theta(0)= \alpha (1+\theta) \left[1-
      \left(\frac{\theta}{1+\theta}\right)^{1/\alpha} 
    \right].
\]

We deduce from \reff{eq:gh-q} that:
\[
h_\theta(r)=(1+\theta)\left[1-\left\{1-r\left[1-
      \left(\frac{\theta}{1+\theta}\right)^{1/\alpha} 
    \right]\right\}^\alpha\right].
\]

Then,   the  change  of  variable  $x=1-(\theta/(1+\theta))^{1/\alpha}$  in
\reff{eq:rBq} gives  that $\varphi_\alpha$, given  in \reff{eq:def-f-a},
is the generating function of $B$.


\bibliographystyle{abbrv}
\bibliography{biblio}

\end{document}